\documentclass[a4paper,10pt,twoside,english]{scrartcl}
\usepackage[utf8x]{inputenc}
\usepackage{mathrsfs}  
\usepackage{dsfont} % used for \id 
\usepackage[T1]{fontenc}
\usepackage{lmodern}
\usepackage{graphicx}
\usepackage{enumerate}
\usepackage{setspace}
\usepackage{color}
\usepackage{xspace} 
\usepackage{amsmath,amssymb,amsthm,mathtools}
\usepackage{blkarray}
\usepackage[subnum]{cases}
\usepackage{subcaption}

\usepackage[unicode=true]{hyperref}

\newcommand{\titel}{Singularities of the density of states of random Gram matrices}
\title{\titel} 
\author{
Johannes Alt\footnote{\hspace{0.15cm}Partially funded by ERC Advanced Grant RANMAT No. 338804.\newline Date: \today}
\addtocounter{footnote}{-1}\addtocounter{Hfootnote}{-1}
\\
{\small \begin{tabular}{c}{IST Austria}\\{johannes.alt@ist.ac.at} \end{tabular}} }
\date{}

%Exports infos for pdflatex
\hypersetup{
	pdftitle={\titel},
	pdfauthor={Johannes Alt},
	colorlinks={false},
	pdfborderstyle={/S/U/W 1}%,
	%allbordercolors={1 0 0}
}

%Seite einrichten
\setlength{\oddsidemargin}{-1in}
\addtolength{\oddsidemargin}{2.0cm}
\setlength{\evensidemargin}{-1in}
\addtolength{\evensidemargin}{2.0cm}
\setlength{\topmargin}{-1in}
\addtolength{\topmargin}{1cm}
\setlength{\textwidth}{17.00cm} %21
\setlength{\textheight}{24.5cm} %29.7
\setlength{\headheight}{15mm}
\setlength{\headsep}{5mm}

\numberwithin{equation}{section}

%for Definitions, Propositions and etc.
\newtheoremstyle{test}% name
  {}%      Space above, empty = `usual value'
  {}%      Space below
  {\itshape}% Body font
  {}%         Indent amount (empty = no indent, \parindent = para indent)
  {\bfseries}% Thm head font
  {.}%        Punctuation after thm head 
  { }% Space after thm head: \newline = linebreak
  {}%         Thm head spec

\newtheoremstyle{testDef}% name
  {}%      Space above, empty = `usual value'
  {}%      Space below
  {}% Body font
  {}%         Indent amount (empty = no indent, \parindent = para indent)
  {\bfseries}% Thm head font
  {.}%        Punctuation after thm head 
  { }% Space after thm head: \newline = linebreak
  {}%         Thm head spec

\theoremstyle{testDef}
\newtheorem{defi}{Definition}[section]
\newtheorem{assums}[defi]{Assumptions}

\newtheorem{rem}[defi]{Remark}

\newtheorem*{rem*}{Remark}   %no numbering
\newtheorem*{ex*}{Example}   %no numbering
\newtheorem*{def*}{Definition}

\theoremstyle{test}
\newtheorem{thm}[defi]{Theorem}
\newtheorem{lem}[defi]{Lemma}

\newtheorem{convention}[defi]{Convention}
\newtheorem{pro}[defi]{Proposition}
\newtheorem*{pro*}{Proposition} %no numbering
\newtheorem*{coro*}{Corollary}
\newtheorem*{thm*}{Theorem}

%%%%%%%%%%%%%%%%%%%%%%%%%%%%%%%%%%%%%% from Torben's file %%%%%%%%%%%%%%%%%%%%%%%%%%%%%%
%character modifiers
 %bold
%\newcommand{\bb}{\mathbb} %blackboard bold
 %blackboard bold
 %upright
%\renewcommand{\sf}{\mathsf} %sans serif

 %underline
\newcommand{\ol}[1]{\overline{#1} \!\,} %overline

%special symbols
\newcommand{\ord} {\mathcal{O}}

%%%%%%%%%%%%%%%%%%%   blackboard bold letters   %%%%%%%%%%%%%%%%%%%%%%%
\renewcommand{\P}{\mathbb{P}}

%%%%%%%%%%%%%%%%%%%   roman letters   %%%%%%%%%%%%%%%%%%%%%%%%

 %\newcommand{\me}{\mathrm{e}}
\newcommand{\ii}{\mathrm{i}} %\newcommand{\mi}{\mathrm{i}}

%absolute value
\newcommand{\abs}[1]{\lvert #1 \rvert}

\newcommand{\absB}[1]{\Big\lvert #1 \Big\rvert}
\newcommand{\absbb}[1]{\bigg\lvert #1 \bigg\rvert}

%norm
\newcommand{\norm}[1]{\lVert #1 \rVert}

%average
\newcommand{\avg}[1]{\langle #1 \rangle}

\newcommand{\avgB}[1]{\Big\langle #1 \Big\rangle}

\newcommand{\avga}[1]{\left\langle #1 \right\rangle}

%scalar product with angle brackets
\newcommand{\scalar}[2]{\langle{#1} \mspace{2mu}, {#2}\rangle}

\newcommand{\scalara}[2]{\left\langle{#1} \,\mspace{2mu},\, {#2}\right\rangle}

%scalar product with round brackets

%commutator

%anticommutator/Poisson bracket

%partial derivatives with respect to z and \bar z 

%Dirac bra-c-ket notation

%Operators

%\DeclareMathOperator{\U}{U}

						%Extreme points
%\DeclareMathOperator*{\sign}{sign}						%Signum
\DeclareMathOperator*{\spec}{Spec}						%Spectrum

% Spacing

%%%%%%%%%%%%%%%%%%%%%%%%%%%%%%%%%%%%%%%%%%%%%%%%%%%%%%%%%%%%%%%%%%%%%%%

% Miscellaneous: 
% use inside lists inside theorem environments to avoid cursivated list headers:

\newcommand{\genarg} {{\,\cdot\,}}  % general argument: f(\genarg) produces dot with little space around it

%%%%%%%%%%%%%%%%%%%%%%%%%%%%%%%%%%%%%%%%%%%%%%%%%%%%%%%%%%%%%%%%%%%%%%%%%%%%%%%%%%%%%%%%%%%%%%%%%%%%%%%%%%%%%%%%%%%%%%%%%%%%%%%%%%%%%%%%
%END macro file 
%%%%%%%%%%%%%%%%%%%%%%%%%%%%%%%%%%%%%%%%%%%%%%%%%%%%%%%%%%%%%%%%%%%%%%%%%%%%%%%%%%%%%%%%%%%%%%%%%%%%%%%%%%%%%%%%%%%%%%%%%%%%%%%%%%%%%%%%

\newcommand{\meas}{\pi}
\newcommand{\measone}{\pi_1}
\newcommand{\meastwo}{\pi_2}
\newcommand{\measr}{\frac{\measone(\Xfone)}{\meastwo(\Xftwo)}} %Ratio of the two components

\newcommand{\brho}{\boldsymbol \rho}

\newcommand{\dens}{\nu}

\newcommand{\Sone}{\mathcal S_1}
\newcommand{\Stwo}{\mathcal S_2}

\newcommand{\BF}{\mathscr B}
\newcommand{\BFone}{{{\mathscr B}_1}}

\newcommand{\BFtwo}{{{\mathscr B}_2}}

\newcommand{\posset}{\mathfrak P} 
\newcommand{\Xf}{\mathfrak X}
\newcommand{\Xfone}{{\Xf_1}}
\newcommand{\Xftwo}{{\Xf_2}}

%command for g, m and d

\newcommand{\emin}{\ensuremath{\boldsymbol{e}_-}}

\newcommand{\df}{\boldsymbol d}

\newcommand{\wf}{\boldsymbol w}
\newcommand{\uf}{\boldsymbol u}

\newcommand{\ffp}{\boldsymbol f_+}
\newcommand{\ffm}{\boldsymbol f_-}
\newcommand{\gf}{\boldsymbol g}
\newcommand{\yf}{\boldsymbol y}

\newcommand{\bb}{\boldsymbol b}

\newcommand{\mf}{\boldsymbol m}
\newcommand{\pf}{\boldsymbol p}
 %alternative \ensuremath{\text{ rhs of } \eqref{eq:stab_equation_rewritten}}

\newcommand{\Bf}{\boldsymbol B} %{\mathfrak B}

\newcommand{\Ff}{\boldsymbol F}

\newcommand{\Hf}{\boldsymbol H}

\newcommand{\Qfp}{{{\boldsymbol Q}_+}}

\newcommand{\Qfpm}{{{\boldsymbol Q}_\pm}}

\newcommand{\Sf}{\boldsymbol S}

\newcommand{\R}{\mathbb{R}}  % The real numbers.
\ifdefined\C\renewcommand{\C}{\mathbb{C}}\else\newcommand{\C}{\mathbb{C}}\fi % Complex numbers
\renewcommand{\Im}{\mathrm{Im}\,} %imaginary part of a complex number
\renewcommand{\Re}{\mathrm{Re}\,} %real part of a complex number
 %imaginary part of a complex number
\newcommand{\N}{\mathbb{N}}  % Positive integers.	
\newcommand{\E}{\mathbb{E}}  % expected value of random variable	
\renewcommand{\P}{\mathbb{P}}  % probability measure
  % symbol for error terms
  % X - E X	
  % Integers
  % Rational numbers. 
 % Number field
 % Bounded operators
 % Bounded operators
 % Finite rank operators
 % compact operators
\newcommand{\di}{\mathrm{d}} % differential
\newcommand{\sign}[1]{\mathrm{sign}(#1)} % differential
\newcommand{\eps}{\varepsilon} % "correct" epsilon
\newcommand*{\defeq}{\mathrel{\vcenter{\baselineskip0.5ex \lineskiplimit0pt\hbox{\scriptsize.}\hbox{\scriptsize.}}}=}

\newcommand{\pt}{\partial}

\DeclareMathOperator{\supp}{supp}

 %Graph of a map f: X \to Y in X\times Y
\newcommand{\Rnon}{\ensuremath{\R_{0}^+}}

\newcommand{\normtwo}[1]{\lVert #1 \rVert_{2}}
\newcommand{\normtwoa}[1]{\left\lVert #1 \right\rVert_{2}}

\newcommand{\norminf}[1]{\lVert #1 \rVert_{\infty}}

\DeclareMathOperator{\Gap}{Gap}

\newcommand{\Hb}{\mathbb H}

\newcommand{\deltamf}{{\tilde{\delta}}}
\newcommand{\HbdS}{{\mathbb H}_{\deltamf}^\Sigma}
\newcommand{\HbdSclosed}{\smash{\overline{\mathbb H}}\vphantom{\mathbb H}_{\deltamf}^\Sigma}

\newcommand{\Dquad}{\mathcal D}

\renewcommand{\char}{\ensuremath{\chi}} %\mathbf{1}}}
	
 %bold
 %upright

%Assumptions for Gram equation
\newcommand{\measprop}{\hyperlink{lab:measprop}{\textbf{(A1)}}\xspace}
\newcommand{\Ltwoinf}{\hyperlink{lab:Ltwoinf}{\textbf{(A4)}}\xspace}
\newcommand{\lowerbound}{\hyperlink{lab:lowerbound}{\textbf{(A2)}}\xspace}
\newcommand{\mbounded}{\hyperlink{lab:mbounded}{\textbf{(A3)}}\xspace}

%Assumptions for QVE
%\newcommand{\measpropXf}{\hyperlink{lab:measpropXf}{\textbf{(C1)}}\xspace}

\newcommand{\mfbounded}{\hyperlink{lab:mfbounded}{\textbf{(C2)}}\xspace}

%Assumptions for random matrix
%\newcommand{\ranprop}{\hyperlink{lab:ranprop}{\textbf{(B1)}}\xspace}
\newcommand{\ranupbound}{\hyperlink{lab:ranupbound}{\textbf{(B1)}}\xspace}
\newcommand{\ranmombound}{\hyperlink{lab:ranmombound}{\textbf{(B2)}}\xspace}

\begin{document}

\maketitle
%\vspace*{-1.0cm}
\vspace*{-1.3cm}
\begin{abstract}
For large random matrices $X$ with independent, centered entries but not necessarily identical variances,  
 the eigenvalue density of $XX^*$ is well-approximated by a deterministic measure on $\R$.
We show that the density of this measure has only square and cubic-root singularities away from zero. 
We also extend the bulk local law in \cite{AltGram} to the vicinity of these singularities. 
\end{abstract}

\noindent \emph{Keywords:} Local law, Dyson equation, square-root edge, cubic cusp, general variance profile. \\
\textbf{AMS Subject Classification:} 60B20, 15B52

\section{Introduction}

The empirical eigenvalue density or \emph{density of states} of many large random matrices is well-approximated by a deterministic probability measure, the \emph{self-consistent density of states}.
If $X$ is a $p \times n$ random matrix with independent, centered entries of identical variances then the limit of 
the eigenvalue density of the \emph{sample covariance matrix} $XX^*$ for large $p$ and $n$ with $p/n$ converging to a constant has been identified by Marchenko and Pastur in \cite{MarcenkoPastur1967}. 
However, some applications in wireless communication require understanding the spectrum of $XX^*$ 
without the assumption of identical variances of the entries of $X=(x_{kq})_{k,q}$~\cite{wirelesscommunication,hachem2007,TulinoVerdu}.
In this case, the matrix $XX^*$ is a \emph{random Gram matrix}.

For constant variances, the self-consistent density of states is obtained by solving a scalar equation for its Stieltjes transform, the \emph{scalar Dyson equation}. 
In case the variances $s_{kq} \defeq \E \abs{x_{kq}}^2$ depend nontrivially on $k$ and $q$, the self-consistent density of states is obtained from the 
solution $m(\zeta)=(m_1(\zeta), \ldots, m_p(\zeta))\in\Hb^p$ of the \emph{vector Dyson equation} \cite{girko2012theory}
\begin{equation} \label{eq:Dyson_Gram_matrix}
-\frac{1}{m_k(\zeta)} =  \zeta - \sum_{q=1}^n s_{kq} \Big(1+ \sum_{l=1}^p s_{lq}m_l(\zeta)\Big)^{-1} \qquad\quad \text{for all } k \in [p],
\end{equation}
for all $\zeta \in \Hb$. Here, we introduced $\Hb \defeq \{ \zeta \in \C \colon \Im \zeta >0 \}$ and 
$[p] \defeq \{1, \ldots, p\}$. Indeed, the average $\avg{m(\zeta)}_1 \defeq p^{-1}\sum_{k=1}^p m_k(\zeta)$ is the Stieltjes transform of the self-consistent density of states
denoted by $\avg{\dens}_1$.
If the limit of $\avg{\dens}_1$ as $p,n\to\infty$ exists then it can be studied via an infinite-dimensional version of \eqref{eq:Dyson_Gram_matrix} 
 (see \eqref{eq:Gram_equation} below).

%\vspace{-0.1cm}

For Wigner-type matrices, i.e., Hermitian random matrices with independent (up to the Hermiticity constraint), centered entries, the analogue of \eqref{eq:Dyson_Gram_matrix} 
is a quadratic vector equation (QVE) in the language of \cite{AjankiQVE,AjankiCPAM}. 
In these papers, finite and infinite-dimensional versions of the QVE have been extensively studied to analyze the self-consistent density of states whose Stieltjes transform is the average of the solution to the QVE.
 The authors show that the self-consistent density of states has a $1/3$-Hölder continuous density. Except for finitely many square-root and cubic-root singularities this density is real-analytic. 
The square-root behaviour emerges solely at the edges of the connected components of the support of the self-consistent density of states, 
whereas the cubic-root singularities lie inside these components. The detailed stability analyis in \cite{AjankiQVE} is then used in \cite{Ajankirandommatrix} to obtain 
the local law for Wigner-type matrices. A \emph{local law} typically refers to a statement about the convergence of the eigenvalue density to a deterministic measure on a scale slightly above the typical local 
eigenvalue spacing.

%\vspace{-0.1cm}

For the Dyson equation for random Gram matrices, we obtain away from $\zeta=0$ the same results as mentioned above in the QVE setup. 
Furthermore, we extend our local law for random Gram matrices in \cite{AltGram} to the vicinity of the singularities of the self-consistent density of states.
This can be seen as another instance of the universality phenomenon in random matrix theory. Despite the different structure of Gram and Wigner-type matrices, 
the densities of states of these Hermitian random matrices have the same types of singularities. 
We refer to \cite{AltGram} and the references therein for related results about random Gram matrices. 

%\vspace{-0.1cm}

There is a close connection between Gram and Wigner-type matrices. 
The Dyson equation, \eqref{eq:Dyson_Gram_matrix}, can be transformed into a QVE in the sense of \cite{AjankiQVE} and the spectrum of $XX^*$ is closely related to that of a Wigner-type matrix 
in the sense of~\cite{Ajankirandommatrix}. 
This is easiest explained on the random matrix level through a special case of the linearization tricks: If $X$ has independent and centered entries then the random matrix 
\begin{equation} \label{eq:def_Hf}
\Hf = \begin{pmatrix} 0 & X \\ X^* & 0 \end{pmatrix}
\end{equation}
is a Wigner-type matrix and the spectra of $\Hf^2$ and $XX^*$ agree away from zero. 
Therefore, instead of trying to analyze \eqref{eq:Dyson_Gram_matrix} and $XX^*$ directly, it is more efficient to study the corresponding QVE and Wigner-type matrix as in \cite{AltGram}. 
However, owing to the large zero blocks in $\Hf$, its variance matrix is not uniformly primitive (see \textbf{A3} in \cite{AjankiQVE}), a key assumption for the analysis in \cite{AjankiQVE}.
Indeed, the stability operator of the QVE possesses an additional unstable direction $\ffm$, which has to be treated separately. 
In \cite{AltGram}, this study has been conducted in the bulk spectrum and away from the support of $\avg{\dens}_1$, where $\ffm$ did not play an important role at least away from zero. 

In this note, we present a new argument needed in the analysis of the cubic equation (see \eqref{eq:general_cubic} below) describing the stability of the QVE 
close to its singularities in order to incorporate the additional unstable direction. In fact, the analysis of the cubic equation in \cite{AjankiQVE} heavily relies on the uniform primitivity of the 
variance matrix. Adapting this argument to the current setup cannot exclude 
that the coefficients of the cubic and the quadratic term in the cubic equation vanish at the same time due to the presence of~$\ffm$. 
A nonvanishing cubic or quadratic coefficient is however absolutely crucial 
for the cubic stability analysis in \cite{AjankiQVE}. 
Otherwise not only square-root or cubic-root but also higher order singularities would emerge. 
Our main novel ingredient, a very detailed analysis of these coefficients, actually excludes this scenario.
With this essential new input, the regularity and the singularity structure of \eqref{eq:Dyson_Gram_matrix} as well as the local law for $XX^*$ follow by correctly combining the arguments in 
\cite{AjankiQVE,Ajankirandommatrix,AltGram}.

%\vspace{-0.1cm}

\paragraph{Acknowledgement} The author is very grateful to László Erd{\H o}s for many fruitful discussions and many valuable suggestions. The author would also like to thank Torben Krüger for several helpful conversations.

\section{Main results}

\subsection{Structure of the solution to the Dyson equation} \label{subsec:result_equation}

Let $(\Xfone, \Sone, \measone)$ and $(\Xftwo, \Stwo, \meastwo)$ be two finite measure spaces such that  $\measone(\Xfone)$ and $\meastwo(\Xftwo)$ 
are strictly positive. 
Moreover, we denote the spaces of bounded and measurable functions on $\Xfone$ and $\Xftwo$ by
\[ \BF_i \defeq \left\{ u \colon \Xf_i \to \C : \norminf{u} \defeq \sup_{x\in\Xf_i} \abs{u(x)} < \infty \right\}\]
for $i = 1, 2$. We consider $\BFone$ and $\BFtwo$ equipped with the supremum norm $\norminf{\genarg}$. 
We denote the induced operator norms by $\norm{\genarg}_{\BFone\to\BFtwo}$ and $\norm{\genarg}_{\BFtwo\to\BFone}$. 
For $u \in\BFone$, we write $u_k =u(k)$ for $k \in \Xfone$.  We use the same notation for~$v\in\BFtwo$.

Let $s\colon \Xfone \times \Xftwo \to \Rnon, s(k,q) = s_{kq}$ be a measurable nonnegative function such that 
\begin{equation} \label{eq:kernel_bound_inf_to_inf}
 \sup_{k \in \Xfone} \int_\Xftwo s_{kq} \meastwo(\di q) < \infty, \qquad  \sup_{q \in \Xftwo} \int_\Xfone s_{kq} \measone(\di k) < \infty.
\end{equation}
 We define the bounded linear operators $S \colon \BFtwo \to \BFone$ and $S^t \colon \BFone \to \BFtwo$ through 
\begin{equation} \label{eq:def_S_St}
 (Sv)_k =\int_\Xftwo s_{kr} v_r \meastwo(\di r), ~~ k \in \Xfone, ~ v \in \BFtwo, 
\quad  (S^tu)_q =\int_\Xfone s_{lq} u_l \measone(\di l), ~~ q \in \Xftwo, ~ u \in \BFone.
\end{equation}
We are interested in the solution $m \colon \Hb \to \BFone$ of the \emph{Dyson equation}
\begin{equation} \label{eq:Gram_equation}
 - \frac{1}{m(\zeta)} = \zeta - S \frac{1}{1 + S^t m(\zeta)}, 
\end{equation}
for $\zeta \in \Hb$, which satisfies $\Im m(\zeta) >0$ for all $\zeta \in \Hb$. 

\begin{pro}[Existence and Uniqueness] \label{pro:existence_and_uniqueness}
If \eqref{eq:kernel_bound_inf_to_inf} holds true then there is a unique function $m\colon \Hb \to \BFone$ 
satisfying \eqref{eq:Gram_equation} and $\Im m(\zeta) >0$ for all $\zeta \in \Hb$. 
Moreover, $m \colon \Hb \to \BFone$ is analytic. 
For each $k \in \Xfone$, there is a unique probability measure $\dens_k$ on $\R$ such that $m_k$ is the Stieltjes transform of $\dens_k$, i.e., 
\begin{equation} \label{eq:def_dens}
 m_k(\zeta) = \int_{0}^{\infty} \frac{1}{E-\zeta} \dens_k(\di E) 
\end{equation}
for all $\zeta \in \Hb$. The support of $\dens_k$ is contained in $[0,\Sigma]$ for each $k \in \Xfone$, where
\begin{equation} \label{eq:def_Sigma_support}
\Sigma \defeq 4 \max\left\{\norm{S}_{\BFtwo \to\BFone},\norm{S^t}_{\BFone \to\BFtwo} \right\}. 
\end{equation}
\end{pro}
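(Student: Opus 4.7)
The plan is to prove the four assertions in turn: existence and uniqueness via a fixed-point argument combined with a hyperbolic contraction, analyticity via the Banach-space implicit function theorem, the Stieltjes representation via Herglotz--Nevanlinna, and the support bound via the Hermitization trick from the introduction.

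For existence and uniqueness, I rewrite \eqref{eq:Gram_equation} as the fixed-point equation $m = \Psi_\zeta(m)$ with
\[
\Psi_\zeta(m) \defeq -\bigl(\zeta - S(1 + S^t m)^{-1}\bigr)^{-1}.
\]
Since $s \geq 0$ and $\Im \zeta > 0$, one verifies that $\Psi_\zeta$ preserves the open cone $\{m \in \BFone : \essinf_k \Im m_k > 0\}$. For existence at $\zeta = \ii\eta$ with $\eta \gg 1$, I iterate $\Psi_\zeta$ from the initial guess $m_0 = -1/\zeta$; the kernel bound \eqref{eq:kernel_bound_inf_to_inf} ensures that on a small ball in $\BFone$ the iteration is a Banach contraction. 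Uniqueness on all of $\Hb$ is the core analytic step: subtracting the Dyson equations for two hypothetical solutions $m^{(1)}, m^{(2)}$ and using the resolvent identity $(1+S^t m^{(1)})^{-1} - (1+S^t m^{(2)})^{-1} = (1+S^t m^{(1)})^{-1} S^t (m^{(2)}-m^{(1)}) (1+S^t m^{(2)})^{-1}$, one writes $m^{(1)} - m^{(2)} = \mathcal{L}_\zeta(m^{(1)} - m^{(2)})$ with $\mathcal{L}_\zeta$ a bounded linear map whose operator norm, in a suitably weighted sup-norm on $\BFone$, is strictly less than one. This is a concrete incarnation of the Schwarz--Pick contraction on the upper half-plane adapted to the asymmetric operators $S \colon \BFtwo \to \BFone$ and $S^t \colon \BFone \to \BFtwo$. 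With uniqueness in hand, existence propagates from a neighborhood of $\ii\infty$ to all of $\Hb$ by analytic continuation along connected paths.

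Analyticity of $m \colon \Hb \to \BFone$ then follows from the Banach-space implicit function theorem, because the Fréchet derivative $\id - D_m\Psi_\zeta$ is invertible by the same contraction argument. For the Stieltjes representation, each $m_k \colon \Hb \to \Hb$ is analytic, and the Dyson equation yields $-1/m_k(\ii\eta) = \ii\eta + O(1)$ as $\eta \to \infty$, hence $m_k(\ii\eta) \sim -1/(\ii\eta)$. By the Herglotz--Nevanlinna representation theorem combined with this asymptotics, $m_k$ is the Stieltjes transform of a probability measure $\dens_k$ on $\R$, establishing \eqref{eq:def_dens}. For the support bound, I appeal to the linearization $\Hf = \begin{pmatrix} 0 & X \\ X^* & 0 \end{pmatrix}$ from the introduction: under the substitution $\zeta = z^2$ (with $z \in \Hb$) and $m(\zeta) = g_1(z)/z$, the equation \eqref{eq:Gram_equation} is equivalent to the QVE for the Wigner-type matrix $\Hf$ on $\BFone \oplus \BFtwo$, whose self-consistent density is supported in $[-2\sqrt{\norm{\mathcal S}}, 2\sqrt{\norm{\mathcal S}}]$ with $\norm{\mathcal S}_{\BF\to\BF} \leq \max(\norm{S}_{\BFtwo\to\BFone}, \norm{S^t}_{\BFone\to\BFtwo})$. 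Squaring this interval gives $\supp \dens_k \subseteq [0, 4\max(\norm{S}_{\BFtwo\to\BFone}, \norm{S^t}_{\BFone\to\BFtwo})] = [0, \Sigma]$.

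The main obstacle is the uniqueness/contraction step, due to the asymmetry between $\BFone$ and $\BFtwo$: a naive componentwise Schwarz--Pick does not close, and the weighted norm must be designed so that the contributions of $|m^{(i)}|$ and $|(1+S^t m^{(i)})^{-1}|$ from the two spaces combine into a genuine strict contraction. Once this is in place, the rest of the proposition follows from standard arguments.
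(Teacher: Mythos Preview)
Your outline is plausible in spirit, but the paper's route is both shorter and avoids precisely the obstacle you flag. The paper does \emph{not} attack \eqref{eq:Gram_equation} directly; instead it uses the very linearization you invoke only for the support bound as the engine for the whole proposition. Writing $\Sf = \begin{psmallmatrix} 0 & S \\ S^t & 0 \end{psmallmatrix}$ on $\BF = \BFone \oplus \BFtwo$, the Dyson equation \eqref{eq:Gram_equation} is equivalent, via $m(\zeta) = \mf|_{\Xfone}(\sqrt\zeta)/\sqrt\zeta$, to the symmetric QVE $-1/\mf = z + \Sf\mf$. Existence, uniqueness, analyticity, the Stieltjes representation, and the support bound $\supp\brho_x \subset [-2\norm{\Sf}^{1/2}, 2\norm{\Sf}^{1/2}]$ are then quoted directly from Theorem~2.1 and (2.7) of \cite{AjankiQVE}; uniqueness of $m$ and the measures $\dens_k$ are transferred from $\mf$ as in \cite{AltGram}. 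The point is that on $\BF$ the operator $\Sf$ is symmetric and positivity-preserving, so the Earle--Hamilton/Carath\'eodory contraction used in \cite{AjankiQVE} applies without any asymmetry issue---the ``weighted norm designed so that contributions from the two spaces combine'' that you are searching for is exactly what the block embedding provides for free.

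Two concrete weaknesses in your direct approach, should you pursue it: first, the step ``existence propagates from a neighborhood of $\ii\infty$ to all of $\Hb$ by analytic continuation along connected paths'' is not justified by uniqueness alone. Analytic continuation via the implicit function theorem is local, and to rule out blow-up along a path you need an a~priori bound (e.g.\ $\normtwo{\mf(z)} \le 2/|z|$ as in \eqref{eq:L2_bound}, or a genuine hyperbolic-metric argument valid on all of $\Hb$, not just near $\ii\infty$). Second, the contraction $\norm{\mathcal{L}_\zeta} < 1$ you assert is exactly the content of the Schwarz--Pick step you admit is the main obstacle; without it neither uniqueness nor the invertibility of $\id - D_m\Psi_\zeta$ needed for analyticity is established. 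Both difficulties evaporate once you symmetrize first and cite \cite{AjankiQVE}.
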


Further assumptions on $\measone$, $\meastwo$ and $S$ will yield a more detailed understanding of the measures $\dens_k$. 
To formulate these assumptions, we 
introduce  the averages of $u \in \BFone$ 
and $v \in \BFtwo$ through
\[\avg{u}_1 = \frac{1}{\measone(\Xfone)} \int_\Xfone u_k \measone(\di k), \qquad 
\avg{v}_2 = \frac{1}{\meastwo(\Xftwo)} \int_\Xftwo v_q \meastwo(\di q).
\] 
Additionally, we set $\norm{u}_t \defeq \avg{\abs{u}^t}_1^{1/t}$ and $\norm{v}_t \defeq \avg{\abs{v}^t}_2^{1/t}$ for $u \in\BFone$, $v \in \BFtwo$ and $t\geq 1$. 
Moreover, for $k \in \Xfone$ and $q \in \Xftwo$, we define the functions $S_k \colon \Xftwo\to \Rnon, ~ S_k(r) = s_{kr} $ 
and $(S^t)_q \colon \Xfone \to \Rnon,~ (S^t)_q(l) = s_{lq}$.
We call $S_k$ and $(S^t)_q$ the \emph{rows} and \emph{columns} of $S$, respectively. 

\begin{assums} \label{assums:Gram_assumptions}
\begin{enumerate}
\item[\measprop] \hypertarget{lab:measprop}{} The total measures $\measone(\Xfone)$ and $\meastwo(\Xftwo)$ are comparable, i.e., there are constants $0< \pi_* < \pi^*$ such that 
\[ \pi_* \leq \measr\leq \pi^*. \]
\item[\lowerbound] \hypertarget{lab:lowerbound}{} The operators $S$ and $S^t$ are irreducible in the sense that there are $L_1$, $L_2 \in \N$ and $\kappa_1, \kappa_2 >0$ 
such that 
 \[ \left((SS^t)^{L_1}u\right)_{k} \geq \kappa_1\avg{u}_1, \quad \left((S^tS)^{L_2}v\right)_{q} \geq \kappa_2\avg{v}_2, \]
for all $u \in \BFone$, $v \in \BFtwo$ satisfying $u\geq 0$ and $v\geq 0$ and for all $k \in \Xfone$, $q \in \Xftwo$. 
\item[\mbounded] \hypertarget{lab:mbounded}{} The rows and columns of $S$ are sufficiently close to each other in the sense that
there is a continuous strictly monotonically decreasing function $\gamma \colon (0,1] \to \Rnon$ such that $\lim_{\eps\downarrow 0} \gamma(\eps) = \infty$ and 
for all $\eps \in (0,1]$, we have 
\[ 
\gamma(\eps) \leq \min \bigg\{ \inf_{k \in \Xfone} \frac{1}{\measone(\Xfone)} \int_\Xfone \frac{\measone(\di l)}{\eps + \normtwo{S_k-S_{l}}^2},  
\inf_{q \in \Xftwo} \frac{1}{\meastwo(\Xftwo)} \int_\Xftwo \frac{\meastwo(\di r)}{\eps + \normtwo{(S^t)_q-(S^t)_{r}}^2}\bigg\}.
\] 
\item[\Ltwoinf] \hypertarget{lab:Ltwoinf}{} The operators $S$ and $S^t$ map square-integrable functions continuously to bounded functions, i.e., 
there are constants $\Psi_1,\Psi_2>0$ such that 
\[ \norm{S}_{L^2(\meastwo/\meastwo(\Xftwo)) \to \BFone} \leq \Psi_1, \qquad \norm{S^t}_{L^2(\measone/\measone(\Xfone)) \to \BFtwo} \leq \Psi_2. \]
\end{enumerate}
\end{assums}

Our estimates will be uniform in all models that satisfy Assumptions~\ref{assums:Gram_assumptions} with the same constants. 
Therefore, the constants $\pi_*$, $\pi^*$ from \measprop, $L_1$, $L_2$, $\kappa_1$, $\kappa_2$ from \lowerbound,
the function $\gamma$ from \mbounded and $\Psi_1$, $\Psi_2$ from \Ltwoinf are called \emph{model parameters}.
We refer to Remark~\ref{rem:Hoelder_to_boundedness} below for an easily checkable sufficient condition for \mbounded.
We now state our main result about the regularity and the possible singularities of $\dens_k$ defined in \eqref{eq:def_dens}.

\begin{thm}\label{thm:prop_selfcon_density_states}
 If we assume \measprop~--~\Ltwoinf then the following statements hold true:
\begin{enumerate}[(i)]
\item (Regularity of $\dens$) There are $\dens^0 \in \BFone$ and $\dens^d \colon \Xfone \times (0,\infty) \to [0,\infty), ~(k,E) \mapsto \dens_k^d(E)$ such that
\begin{equation} \label{eq:def_dens_0_dens_d}
\dens_k(\di E) = \dens_k^0\delta_0(\di E) + \dens_k^d(E) \di E  
\end{equation}
for all $k \in \Xfone$. 
For all $\delta>0$, the function $\dens^d$ is uniformly $1/3$-Hölder continuous on $[\delta, \infty)$, i.e., 
\[ \sup_{k \in \Xfone} \sup_{E_1 \neq E_2,\, E_1, E_2 \geq \delta}\frac{\abs{\dens^d_k(E_1)-\dens^d_k(E_2)}}{\abs{E_1 - E_2}^{1/3}} < \infty. \] 
For all $k \in \Xfone$, we have 
\[ \{ E \in (0,\infty) \colon \avg{\dens^d(E)} >0 \} = \{ E \in (0,\infty)\colon \dens^d_k(E) >0 \}.   \]
We set $\posset \defeq \{ E \in (0,\infty) \colon \avg{\dens^d(E)} >0 \}$. 
For each $\delta >0$, the set $\posset\cap (\delta, \infty)$ is a finite union of open intervals. 
The map $\dens^d \colon (0,\infty)\setminus \pt\posset \to \BFone$ is real-analytic. 
There is $\rho_*>0$ depending only on the model parameters 
and $\delta$ such that the Lebesgue measure of each connected component of $\posset\cap(\delta,\infty)$ is at least $2\rho_*$.
\item (Singularities of $\dens^d$) Fix $\delta>0$. For any $E_0 \in (\pt\posset)\cap (\delta, \infty)$, there are two cases
\begin{itemize} 
\item[~~\emph{CUSP:}] The point $E_0$ is the intersection of the closures of two connected components of $\posset\cap (\delta,\infty)$ and $\dens^d$ has a cubic root singularity at $E_0$, i.e., 
there is $c \in \BFone$ satisfying $\inf_{k\in\Xfone} c_k >0$ such that 
\[ \dens_k^d(E_0 + \lambda) = c_k \abs{\lambda}^{1/3} + \ord(\abs{\lambda}^{2/3}), \qquad \lambda \to 0. \] 
\item[~~\emph{EDGE:}] The point $E_0$ is the left or right endpoint of a connected component of $\ol{\posset}\cap(\delta, \infty)$ and $\dens^d$ has a square root singularity at $E_0$, i.e., 
there is $c \in \BFone$ satisfying $\inf_{k \in\Xfone} c_k >0$ such that 
\[ \dens_k^d(E_0 + \theta \lambda) = c_k \lambda^{1/2} + \ord(\lambda), \qquad \lambda\downarrow 0, \]
where $\theta = + 1$ if $E_0$ is a left endpoint of $\posset$ and $\theta = -1$ if $E_0$ is a right endpoint. 
\end{itemize}
\end{enumerate}
\end{thm}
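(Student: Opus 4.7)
The plan is to lift \eqref{eq:Gram_equation} to a quadratic vector equation (QVE) on $\Xfone\sqcup\Xftwo$ corresponding to the Hermitization \eqref{eq:def_Hf} and then import the structural results on QVEs from \cite{AjankiQVE,AjankiCPAM}, while handling by hand the additional unstable direction $\ffm$ of the stability operator that arises from the block off-diagonal shape of the lifted variance operator $\Sf$. Concretely, I would set $\tilde m(\zeta) \defeq -\zeta^{-1}(1+S^tm(\zeta))^{-1}$ and $\mf(\zeta)\defeq (m(\zeta),\tilde m(\zeta))$ on $\Xfone\sqcup\Xftwo$; a direct manipulation of \eqref{eq:Gram_equation} shows that $\mf$ solves $-1/\mf(\zeta) = \zeta - \Sf\2\mf(\zeta)$ with $\Sf$ the block off-diagonal operator with blocks $S$ and $S^t$, which is precisely the vector Dyson equation for $\Hf$. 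Since the nonzero spectrum of $\Hf^2$ agrees with that of $XX^*$, the density $\dens^d$ on $(0,\infty)$ can be read off from the self-consistent density of states of $\Hf$ via the substitution $E = \lambda^2$, and Proposition~\ref{pro:existence_and_uniqueness} together with Stieltjes inversion produces the decomposition \eqref{eq:def_dens_0_dens_d}, with the possible point mass at $0$ absorbing the kernel of $\Hf$.

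Next I would translate \measprop~--~\Ltwoinf into the QVE level assumptions used in \cite{AjankiQVE,Ajankirandommatrix}. The crucial point is that uniform primitivity (assumption A3 of \cite{AjankiQVE}) fails for $\Sf$: it holds on the even sector but $\ffm$ is an extra flat direction on the odd sector. Away from $\zeta=0$, however, the decomposition used in \cite{AltGram} still yields a quantitatively bounded pseudo-inverse of the stability operator on the complement of $\ffm$, depending only on the model parameters and on $\delta$. This produces a uniformly bounded, analytic extension of $\mf$ to a neighbourhood of $(0,\infty)\setminus\pt\posset$, from which the Hölder regularity and real analyticity of $\dens^d$, as well as the claim that $\dens^d_k$ and $\avg{\dens^d}$ have the same positivity set, follow by the same QVE regularity machinery as in \cite{AjankiQVE}. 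The uniform lower bound $2\rho_*$ on the Lebesgue measure of connected components of $\posset\cap(\delta,\infty)$ is a direct consequence of this quantitative control.

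The heart of the argument, and the step I expect to be the main obstacle, is the local analysis near a point $E_0\in\pt\posset\cap(\delta,\infty)$. Following \cite{AjankiQVE}, one linearizes the QVE around $\mf(E_0)$ along the left/right eigenvectors of the stability operator associated with its smallest eigenvalue; the resulting scalar perturbation $\Theta(\zeta)$ satisfies a cubic equation of the form
\begin{equation} \label{eq:plan_cubic}
\mu_3\2\Theta(\zeta)^3 + \mu_2\2\Theta(\zeta)^2 + \pi(\zeta) = 0,
\end{equation}
modulo higher-order remainders, with $\pi(\zeta)$ essentially linear in $\zeta-E_0$. A square-root edge corresponds to $\mu_2\neq 0$ and a cubic cusp to $\mu_2=0$ with $\mu_3\neq 0$; the scenario $\mu_2=\mu_3=0$ would produce higher-order singularities and must be excluded. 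In \cite{AjankiQVE} this exclusion is obtained from uniform primitivity, which is not available here because of the extra direction $\ffm$. I would instead compute $\mu_2$ and $\mu_3$ explicitly in terms of $\mf(E_0)$, the projection onto $\ffm$, and the leading eigenvector of the stability operator, and then exploit the block symmetry encoded in $\ffm$ (namely $\ffm$ being $+1$ on $\Xfone$ and $-1$ on $\Xftwo$) to show that $\mu_3\neq 0$ whenever $\mu_2=0$. This computation must remain quantitative in the model parameters in order to yield uniform constants.

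Once \eqref{eq:plan_cubic} has a non-degenerate quadratic or cubic leading coefficient, solving for $\Theta$ in the two regimes and propagating the scalar shape back to $\mf$ through the pseudo-inverse of the stability operator yields the expansions
\begin{equation*}
\dens^d_k(E_0+\theta\lambda) = c_k\2\lambda^{1/2} + \ord(\lambda)\ \ \text{(EDGE)},\qquad \dens^d_k(E_0+\lambda) = c_k\2\abs{\lambda}^{1/3} + \ord(\abs{\lambda}^{2/3})\ \ \text{(CUSP)},
\end{equation*}
with $\inf_k c_k>0$, where the uniform positivity of $c_k$ is inherited from \lowerbound and \mbounded via the entrywise structure of $\mf(E_0)$. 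The geometric identification of $E_0$ as a left or right endpoint versus a touching point of two connected components of $\posset$ is then read off from the sign structure of $\pi(\zeta)$ in \eqref{eq:plan_cubic}, completing (ii) and the remaining geometric claims of (i).
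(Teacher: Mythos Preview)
Your overall strategy coincides with the paper's: lift \eqref{eq:Gram_equation} to a QVE for the Hermitization, import the regularity machinery of \cite{AjankiQVE}, and replace the uniform--primitivity argument for the non-degeneracy of the cubic by a direct analysis exploiting the $\ffm$--symmetry. Two points, however, need to be fixed.

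First, the linearization you wrote down is incorrect. With $\tilde m(\zeta)\defeq -\zeta^{-1}(1+S^t m(\zeta))^{-1}$ one computes $-1/m=\zeta+\zeta S\tilde m$ and $-1/\tilde m=\zeta+\zeta S^t m$, i.e.\ $-1/\mf=\zeta(1+\Sf\mf)$, which is \emph{not} of the form $-1/\mf=\zeta-\Sf\mf$ and not a QVE in the sense of \cite{AjankiQVE}. The paper instead performs the square-root substitution $z^2=\zeta$: the solution of $-1/\mf(z)=z+\Sf\mf(z)$ is linked to $m$ via $m(\zeta)=\mf|_{\Xfone}(\sqrt\zeta)/\sqrt\zeta$, and the densities via $\dens^d_k(E)=E^{-1/2}\brho^d_k(E^{1/2})$. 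Without this change of variables you cannot invoke the QVE theory directly, and the Stieltjes-transform bookkeeping for \eqref{eq:def_dens_0_dens_d} as you described it does not go through.

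Second, and more substantively, your handling of the key step --- ruling out $\mu_2=\mu_3=0$ in the cubic --- is only a plan. The paper reduces this to proving $\psi+\sigma^2\gtrsim 1$, where $\psi=\Dquad(\pf\ffp^2)$ and $\sigma=\avg{\pf\ffp^3}$, and the proof uses two ingredients you have not identified: (a) the algebraic constraint $z\,\scalar{\emin}{\mf(z)}=-\avg{\emin}$ for all $z\in\Hb$, which follows from $\scalar{\emin}{\mf\,\Sf\mf}=0$; and (b) a dichotomy on the size of $\abs{\avg{\emin}}=\abs{\measone(\Xfone)-\meastwo(\Xftwo)}/(\measone(\Xfone)+\meastwo(\Xftwo))$. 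When $\abs{\avg{\emin}}$ is small, (a) combined with Jensen's inequality yields $\psi+\sigma^2\gtrsim\avg{\ffp\abs{\Re\mf}}^2\gtrsim 1$; when $\abs{\avg{\emin}}$ is bounded away from zero, the orthogonality $\avg{\emin\ffp^2}=\scalar{\ffm}{\ffp}=0$ gives $\avg{\emin}^2\lesssim\psi+\sigma^2$. Simply ``exploiting the block symmetry encoded in $\ffm$'' does not isolate these mechanisms, and this computation is exactly the new input of the paper over \cite{AjankiQVE}; your proposal would stall here.
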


\begin{figure}[ht!]
\begin{subfigure}{.5\textwidth} 
\includegraphics[width=.95\textwidth]{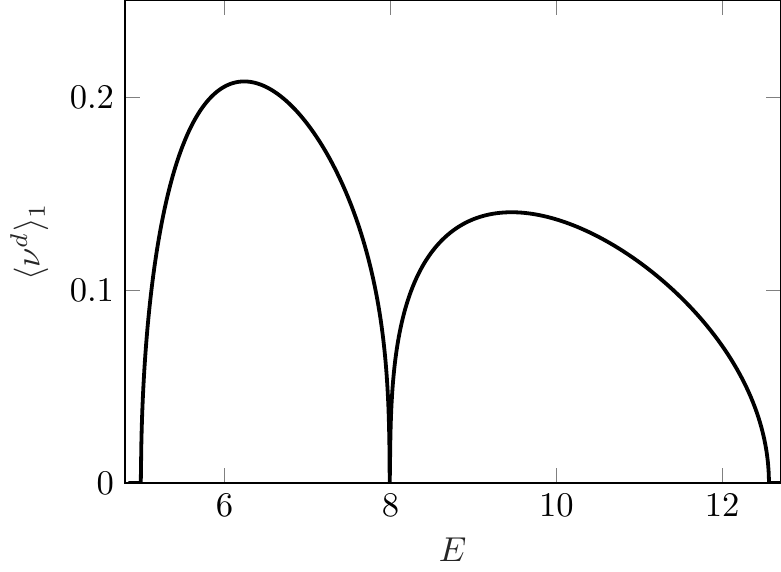}
\caption{Self-consistent density of states $\avg{\dens^d}_1$.}
\end{subfigure}
\begin{subfigure}{.5\textwidth}
\[ S^t = 
\begin{array}{c l} 
\vspace{-1.0cm} & \\ 
\xleftrightarrow[\phantom{\hspace{2.3cm}}]{\displaystyle\kappa_c n}%
\xleftrightarrow[\phantom{\hspace{2.3cm}}]{\displaystyle\kappa_c n}& \\
\begin{array}{|c|}\hline \hspace{1cm} 6 \hspace{1cm}\\ \hline \hspace{1cm} 4 \hspace{1cm} \\ \hline \end{array} 
\begin{array}{|c|}\hline \hspace{1cm} 4 \hspace{1cm}\\ \hline \hspace{1cm} 3 \hspace{1cm}\\\hline\end{array}& 
\left.
\phantom{\begin{array}{|c|}\hline \\\hline \\\hline\end{array}} \hspace{-0.5cm}
 \right\updownarrow{n}
\end{array}
\]
\caption{Variance profile.}
\end{subfigure}
\caption{Example of a self-consistent density of states with variance profile $S$. It has square-root edges at the left and right endpoint of its support and a cubic cusp at $E \approx 8$.}
\end{figure}

\begin{rem}[Piecewise Hölder-continuous rows and columns of $S$ imply \mbounded] \label{rem:Hoelder_to_boundedness}
Let $\Xfone$ and $\Xftwo$ be two nontrivial compact intervals in $\R$ and $\measone$ and $\meastwo$ the Lebesgue measures.
In this case, \mbounded holds true if the maps $k \mapsto S_k$ and $r \mapsto (S^t)_r$ are piecewise $1/2$-Hölder continuous in the sense that
 there are two finite partitions $(I_\alpha)_{\alpha \in A}$ and $(J_\beta)_{\beta \in B}$ of $\Xfone$ and $\Xftwo$, respectively, such that, for all $\alpha \in A$ and $\beta\in B$, we have
 \[ \normtwo{S_k - S_l} \leq C_\alpha \abs{k-l}^{1/2}~\text{ for } k, l \in I_\alpha, \qquad \normtwo{(S^t)_q - (S^t)_r} \leq D_\beta \abs{q-r}^{1/2}~\text{ for } q, r \in J_\beta.  \]
There is a similar condition for \mbounded if $\Xfone = [p]$ and $\Xftwo =[n]$ for some $p,n \in\N$ and the measures $\measone$ and $\meastwo$ are the (unnormalized) counting measures on $[p]$ and $[n]$, respectively. 
\end{rem}

\subsection{Local law for random Gram matrices} \label{subsec:result_Gram}

In this subsection, we state our results on random Gram matrices. We now set $\Xfone = [p]$, $\Xftwo = [n]$ as well as $\measone$ and $\meastwo$ the (unnormalized) counting 
measures on $[p]$ and $[n]$, respectively. In particular, $\measone(\Xfone) = p$ and $\meastwo(\Xftwo) = n$.

\begin{assums}\label{assums:random_matrix} Let $X=(x_{kq})_{k,q}$ be a $p\times n$ random matrix with independent, centered entries and variance matrix $S = (s_{kq})_{k,q}$, i.e., 
$\E\, x_{kq} = 0$  and $s_{kq} \defeq \E \abs{x_{kq}}^2$ for $k\in [p]$, $q \in [n]$.  
Moreover, we assume that \measprop, \lowerbound and \mbounded in Assumptions \ref{assums:Gram_assumptions} and the following conditions are satisfied.
\begin{enumerate}
\item[\ranupbound] \hypertarget{lab:ranupbound}{} The variances are bounded in the sense that there exists $s^* >0$ such that 
\[ s_{kq} \leq \frac{s^*}{p+n} \qquad\qquad \text{for }k\in [p],~ q\in [n]. \]
\item[\ranmombound] \hypertarget{lab:ranmombound}{} All entries of $X$ have bounded moments in the sense that
there are $\mu_m >0$ for $m \geq 3$ such that 
\[ \E \abs{x_{kq}}^m \leq \mu_m s_{kq}^{m/2}\qquad\qquad \text{for all }k \in[p], ~q \in[n].  \]
\end{enumerate}
\end{assums}

\noindent The sequence $(\mu_m)_{m\geq 3}$ in \ranmombound is also considered a model parameter. 

Since \ranupbound implies \Ltwoinf, we can apply Theorem \ref{thm:prop_selfcon_density_states}. 
By its first part, for every $\delta>0$, there are $\alpha_1, \ldots, \alpha_K$, $\beta_1, \ldots, \beta_K \in [\delta, \infty)$ for some $K \in \N$ such that 
\[ \supp \avgB{\dens^d|_{[\delta, \infty)}}_1 =\bigcup_{i=1}^K [\alpha_i, \beta_i], \qquad \alpha_j  < \beta_j < \alpha_{j+1} \]
and $\rho_*>0$ depending only on the model parameters and $\delta$ such that $\beta_i-\alpha_i \geq 2\rho_*$ for all $i\in [K]$. 
For $\rho \in [0,\rho_*)$, we introduce the \emph{local gap size} $\Delta_\rho$ via
\begin{equation} \label{eq:def_Delta_rho}
 \Delta_\rho(E) \defeq \begin{cases} \alpha_{i+1} - \beta_i, & \text{if } \beta_i -\rho \leq E \leq \alpha_{i+1} + \rho \text{ for some }i \in [K],\\
1, & \text{if } E \leq \alpha_1 + \rho \text{ or }E \geq \beta_K - \rho, \\ 0, &\text{otherwise.} \end{cases} 
\end{equation}
For $\delta,\gamma>0$, we define the spectral domain $\mathbb{D}_{\delta, \gamma} \defeq \big\{ \zeta \in\Hb \colon \abs{\zeta} \geq \delta,~ \Im \zeta \geq p^{-1 + \gamma} \big\}$.
We introduce the resolvent $R(\zeta) \defeq (XX^*-\zeta)^{-1}$ of $XX^*$ at $\zeta \in\Hb$ and denote its entries by $R_{kl}(\zeta)$ for $k, l \in [p]$.

\begin{thm}[Local law for Gram matrices] \label{thm:local_law}
Let Assumptions \ref{assums:random_matrix} hold true.
Fix $\delta>0$ and $\gamma \in (0,1)$. 
Then there is $\rho \in (0,\rho_*)$ depending only on the model parameters and $\delta$ 
such that if we define $\kappa = \kappa^{(p)} \colon \Hb \to (0,\infty]$ through
\[ \kappa(\zeta) = \big(\Delta_\rho(\Re \zeta)^{1/3} + \avg{\Im m(\zeta)}\big)^{-1} \]
then, for each $\eps>0$ and $D>0$, there is a constant $C_{\eps, D}>0$ such that
\begin{subequations} \label{eq:local_laws}
\begin{equation} \label{eq:local_law_entry}
\P \left( \sup_{\substack{\zeta \in \mathbb{D}_{\delta, \gamma}\\ k,l\in [p]}}p^{-\eps}\absB{R_{kl}(\zeta) - m_k(\zeta) \delta_{kl}}\leq \sqrt{\frac{\avg{\Im m(\zeta)}}{p\Im \zeta}} + \min\bigg\{ \frac{1}{\sqrt{p\Im\zeta}}, \frac{\kappa(\zeta)}
{p\Im \zeta}\bigg\} \right) \geq 1 - \frac{C_{\eps,D}}{p^D}. 
\end{equation} 
Furthermore, for any $\eps>0$ and $D>0$, there is a constant $C_{\eps,D}>0$ such that, for any deterministic vector $w \in \C^p$ satisfying $\max_{k \in [p]} \abs{w_k} \leq 1$, we have 
\begin{equation} \label{eq:local_law_average}
\P \left( \sup_{\zeta \in \mathbb{D}_{\delta, \gamma}} \absbb{\frac{1}{p} \sum_{k=1}^p w_k \Big( R_{kk}(\zeta) - m_k(\zeta)\Big)} \leq p^\eps \min\bigg\{ \frac{1}{\sqrt{p\Im \zeta}}, \frac{\kappa(\zeta)}{p\Im\zeta}\bigg\} \right) \geq 1 - \frac{C_{\eps,D}}{p^D}.
\end{equation} 
\end{subequations}
The constant $C_{\eps, D}$ in \eqref{eq:local_laws} depends only on the model parameters as well as $\delta$ and $\gamma$ in addition to $\eps$ and $D$. 
\end{thm}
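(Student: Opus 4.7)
The plan is to transfer the local law from $XX^*$ to the Hermitization $\Hf = \begin{pmatrix} 0 & X \\ X^* & 0 \end{pmatrix}$ from \eqref{eq:def_Hf}, which is a Wigner-type matrix whose nonzero spectrum is $\pm\sqrt{\spec(XX^*)\setminus\{0\}}$. With $z = \sqrt{\zeta}$, the upper-left $p\times p$ block of $\Gf(z) \defeq (\Hf-z)^{-1}$ equals $z R(z^2)$, so the target estimates on $R_{kl}(\zeta) - m_k(\zeta)\delta_{kl}$ follow from an isotropic, respectively averaged, local law for $\Gf$ on the preimage of $\mathbb{D}_{\delta,\gamma}$ under $z\mapsto z^2$. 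The associated vector Dyson equation for $\mf(z)\in\C^{p+n}$ is a QVE in the sense of \cite{AjankiQVE}, whose first $p$ components recover $m(\zeta)$ from \eqref{eq:Gram_equation}. Theorem \ref{thm:prop_selfcon_density_states}, already established, supplies the full singularity structure of the self-consistent density of states and hence the geometric input for the spectral domain.

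I will then follow the three-step architecture of \cite{AjankiQVE,Ajankirandommatrix,AltGram}: (i) a quantitative stability estimate for the Dyson equation, (ii) high-probability control of the random perturbation $\mf - \mathrm{diag}(\Gf)$ via Schur complement identities and large deviation estimates for quadratic forms — where \ranmombound supplies the necessary moment bounds — and (iii) a continuity-in-$z$ bootstrap from large $\Im z$ down to the scale $\Im z\gtrsim p^{-1+\gamma}$. Outside a fixed neighborhood of $\pt\posset$, the bulk local law of \cite{AltGram} already delivers the weaker bound $1/\sqrt{p\Im\zeta}$, so the real task is to sharpen stability near edges and cusps, where the stability operator of the QVE acquires a near-zero eigenvalue. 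There, the leading behaviour of the averaged deviation $\scalar{\wf}{R-m}$ is captured by a cubic equation of the shape \eqref{eq:general_cubic}; provided the cubic and quadratic coefficients are quantitatively bounded away from zero along any admissible approach to a singular point, cubic stability upgrades $1/\sqrt{p\Im\zeta}$ to $\kappa(\zeta)/(p\Im\zeta)$. The $\Delta_\rho^{1/3}$ term in $\kappa$ encodes the cubic rate at cusps and the $\avg{\Im m}$ term the square-root rate at edges.

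The main obstacle — and the novel ingredient of this note — is that the chiral block form of $\Hf$ equips its stability operator with a \emph{second} unstable mode $\ffm$ beyond the usual Perron direction. The variance matrix of $\Hf$ is therefore not uniformly primitive, and the projection-onto-Perron argument of \cite{AjankiQVE} that reduces the stability equation to a scalar cubic no longer applies: both unstable modes must be tracked when extracting \eqref{eq:general_cubic}. A priori, nothing prevents the cubic and quadratic coefficients from vanishing simultaneously, which would force higher-order singularities incompatible with Theorem \ref{thm:prop_selfcon_density_states} and wreck the cubic stability estimate. The hard step is to rule this scenario out via a careful expansion of these coefficients along the Perron eigenvectors of the doubled problem and along $\ffm$, using \measprop, \lowerbound and \mbounded to obtain quantitative lower bounds on the relevant inner products, and to conclude that at every singular point in $(\delta,\infty)$ at least one of the two coefficients stays bounded below. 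With this coefficient bound in place, cubic stability combined with the fluctuation averaging and Schur complement estimates of \cite{Ajankirandommatrix,AltGram} yields \eqref{eq:local_law_entry} and \eqref{eq:local_law_average} uniformly on $\mathbb{D}_{\delta,\gamma}$, with constants depending only on the model parameters, $\delta$ and $\gamma$.
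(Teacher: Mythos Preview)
Your proposal is correct and follows essentially the same route as the paper: reduce to the Hermitization $\Hf$, view its Dyson equation as a QVE, identify the second unstable direction $\ffm$ coming from the chiral block structure as the only obstruction to running the \cite{AjankiQVE,Ajankirandommatrix} machinery, and remove that obstruction by proving a quantitative lower bound $\abs{\mu_3}+\abs{\mu_2}\gtrsim 1$ on the cubic coefficients (the paper's Lemma~\ref{lem:stab_cubic}) so that Proposition~10.1 of \cite{AjankiQVE} and the local law of \cite{Ajankirandommatrix} go through on $\HbdSclosed$, after which the transfer to $R(\zeta)$ proceeds as in \cite{AltGram}. The only minor imprecision is that the first $p$ components of $\mf(z)$ recover $m(\zeta)$ only after the rescaling $m(\zeta)=m_1(\sqrt{\zeta})/\sqrt{\zeta}$ of \eqref{eq:transform_QVE_to_Gram}, but this does not affect the argument.
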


\begin{rem}
\begin{enumerate}[(i)]
\item (Corollaries of the local law)
In the same way as in \cite{Ajankirandommatrix} and in \cite{AltGram}, the standard corollaries of a local law -- convergence of cumulative distribution function, rigidity of eigenvalues, 
anisotropic law and delocalization of eigenvectors -- may be proven. 
\item (Local law in the bulk and away from $\supp \dens$) 
In the bulk, Theorem \ref{thm:local_law} has already been proven in \cite{AltGram}.
Away from $\supp \dens$, the convergence rate in \eqref{eq:local_law_entry} and \eqref{eq:local_law_average} can be improved and 
thus the condition $\Im \zeta \geq p^{-1 + \gamma}$ can be removed. See~\cite{AltGram} 
for Gram matrices and \cite{AltKronecker} for Kronecker matrices. 
\item (Local law close to zero)
Strengthening the assumption \lowerbound, 
we have proven the local law close to zero in the cases, $n=p$ and $\abs{p-n} \geq c n$, in \cite{AltGram}.
\end{enumerate}
\end{rem}

\section{Quadratic vector equation}

In this section, we translate \eqref{eq:Gram_equation} into a quadratic vector equation of \cite{AjankiQVE} (see \eqref{eq:QVE} below) and show that Proposition \ref{pro:existence_and_uniqueness} trivially
follows from \cite{AjankiQVE}. However, the singularity analysis in \cite{AjankiQVE} has to be changed essentially due to the violation of the uniform primitivity condition, \textbf{A3} in \cite{AjankiQVE}, on $\Sf$ 
(cf. \eqref{eq:def_Sf} below) in our setup. 

Let $\Xf \defeq \Xfone \sqcup \Xftwo$ be the disjoint union of $\Xfone$ and $\Xftwo$
and $\meas$ the probability measure defined through
\[ \meas(A\sqcup B) = \big(\measone(\Xfone) + \meastwo(\Xftwo)\big)^{-1}\big( \measone(A) + \meastwo(B)\big), \qquad \text{for } A\subset \Xfone,~ B \subset \Xftwo. \]
Moreover, we denote the set of bounded measurable functions $\Xf \to \C$ by
$\BF \defeq \{ \wf \colon \Xf \to \C \colon \norminf{\wf} \defeq \sup_{x\in\Xf} \abs{\wf(x)} < \infty\}$
with the supremum norm $\norminf{\genarg}$.
Finally, on $\BF = \BFone \oplus \BFtwo$, we define the linear operator $\Sf \colon \BF \to \BF$ through
\begin{equation} \label{eq:def_Sf}
\Sf \defeq \begin{pmatrix} 0 & S \\ S^t & 0 \end{pmatrix}\text{,} \qquad\qquad \text{i.e.,} \quad \Sf \wf = S(\wf|_\Xftwo) + S^t(\wf|_\Xfone) \quad \text{for }\wf \in \BF.
\end{equation}
%for $\wf \in \BF$. 
Here, we consider $S(\wf|_\Xftwo)$ and $S^t(\wf|_\Xfone)$ as functions $\Xf \to \C$, extended by zero outside of $\Xfone$ and $\Xftwo$, respectively. 
Instead of \eqref{eq:Gram_equation}, we study the quadratic vector equation (QVE)
\begin{equation} \label{eq:QVE}
 -\frac{1}{\mf} = z + \Sf \mf 
\end{equation}
for $z \in \Hb$. Here, we used the change of variables $z^2 = \zeta$. 
We now explain how $\mf$ and $m$ are related. 
If $\mf$ is a solution of \eqref{eq:QVE} then $m_1 \defeq \mf|_\Xfone$
and $m_2 \defeq \mf|_\Xftwo$ satisfy $-m_1^{-1} = z + Sm_2$ and $-m_2^{-1} = z + S^t m_1$. 
Solving the second equation for $m_2$, plugging the result into the first relation and choosing
$z = \sqrt{\zeta} \in \Hb$, we see that $m$ defined through 
\begin{equation} \label{eq:transform_QVE_to_Gram}
m(\zeta) = \frac{m_1(\sqrt\zeta)}{\sqrt\zeta}
\end{equation}
for $\zeta \in \Hb$ is a solution of \eqref{eq:Gram_equation}. If $\mf$ has positive imaginary part then $m$ as well. 

For $\uf\in\BF$, we write $\uf_x \defeq \uf(x)$ with $x \in\Xf$. 
For $\uf, \wf \in \BF$, we denote the scalar product of $\uf$ and $\wf$ and the average of $\uf$ by
\begin{equation} \label{eq:def_scalar_product}
 \scalar{\uf}{\wf} \defeq \int_\Xf \overline{\uf_x}\; \wf_x \meas(\di x), \quad \avg{\uf} \defeq \scalar{1}{\uf} = \int_\Xf \uf_x \meas(\di x). 
\end{equation}
We also introduce the Hilbert space $L^2(\meas) \defeq \{ \uf \colon \Xf \to \C \colon \scalar{\uf}{\uf} < \infty \}$.
The operator $\Sf$ is symmetric on $\BF$ with respect to $\scalar{\genarg}{\genarg}$ and positivity preserving, as $s_{kr} \geq 0$ for all $k \in \Xfone$ and $r \in \Xftwo$.
Therefore, by Theorem 2.1 in \cite{AjankiQVE}, there exists $\mf \colon \Hb \to \BF$ which satisfies \eqref{eq:QVE} for all $z \in \Hb$. 
This function is unique if we require that the solution of \eqref{eq:QVE} satisfies $\Im \mf(z) >0$ for $z \in \Hb$. 
Moreover, $\mf\colon \Hb \to \BF$ is analytic and,
for all $z \in\Hb$, we have 
\begin{equation} \label{eq:L2_bound}
 \normtwo{\mf(z)} \leq 2\abs{z}^{-1}. 
\end{equation}
Furthermore, for all $x \in \Xf$, there are symmetric probability measures $\brho_x$ on $\R$ such that 
\begin{equation} \label{eq:mf_Stieltjes_representation}
 \mf_x(z) = \int_\R \frac{1}{\tau-z} \brho_x(\di \tau) 
\end{equation}
for all $z \in \Hb$ \cite{AjankiQVE}. That means that $\mf_x$ is the Stieltjes transform of $\brho_x$. 
By (2.7) in \cite{AjankiQVE}, 
 the definition of $\Sigma$ in \eqref{eq:def_Sigma_support} and 
 $\norm{\Sf} = \norm{\Sf}_{\BF \to \BF} = \max\{ \norm{S}_{\BFtwo\to \BFone} , \norm{S^t}_{\BFone\to \BFtwo} \}$,
the support of $\brho_x$ is contained in $[- \Sigma^{1/2}, \Sigma^{1/2}]$. 
\begin{proof}[Proof of Proposition \ref{pro:existence_and_uniqueness}]
The existence of $m$ directly follows from the transform in \eqref{eq:transform_QVE_to_Gram} 
and the existence of $\mf$. The uniqueness of $m$ and the existence of $\dens_k$, $k \in \Xfone$, 
are obtained as in the proof of Theorem 2.1 in \cite{AltGram}. 
\end{proof}

The special structure of $\Sf$ (cf. \eqref{eq:def_Sf}) implies an important symmetry of the solution~$\mf$. We multiply \eqref{eq:QVE} by $\mf$ and take the scalar product of the result with $\emin \in \BF$ defined 
through $\emin(k) = 1$ if $k \in \Xfone$ and $\emin(q) = -1$ if $ q \in \Xftwo$. As $\scalar{\emin}{\mf (\Sf \mf)} = 0$, we have
\begin{equation} \label{eq:scalar_emin_m}
 z \scalar{\emin}{\mf} = - \avg{\emin} = - \frac{\measone(\Xfone)-\meastwo(\Xftwo)}{\measone(\Xfone) + \meastwo(\Xftwo)},
\qquad \text{ for all }z \in \Hb.
\end{equation}

\begin{assums}\label{assums:QVE}
In the remainder of this section, 
we assume that \measprop, \lowerbound, \Ltwoinf and the following condition hold true:
\begin{enumerate}
\item[\mfbounded] \hypertarget{lab:mfbounded}{}
There are $\deltamf>0$ and $\Phi >0$ such that for all $z \in \Hb$ satisfying $\abs{z} \geq \deltamf$, we have
\[ \norminf{\mf(z)} \leq \Phi.\]
\end{enumerate}
\end{assums}

\begin{rem}[Relation between \mbounded and \mfbounded] \label{rem:Gram_assumptions_to_QVE}
By slightly adapting the proofs of Theorem 6.1 (ii) and Proposition 6.6 in \cite{AjankiQVE}, we see that, by \mbounded, 
for each $\deltamf>0$, there is $\Phi_{\deltamf}>0$ such that 
\mfbounded is satisfied with a constant $\Phi\equiv\Phi_{\deltamf}$.
\end{rem}

Since our estimates in this section will be uniform in all 
models that satisfy \measprop, \lowerbound, \Ltwoinf and \mfbounded with the same constants, we introduce the following notion.

\begin{convention}[Comparison relation] 
For nonnegative scalars or vectors $f$ and $g$, we will use the notation $f \lesssim g$ if there is a constant $c>0$, depending only on 
$\pi_*, \pi^*$ in \measprop, $L_1,L_2, \kappa_1, \kappa_2$ in \lowerbound, $\Psi_1, \Psi_2$ in \Ltwoinf as well as $\deltamf$ and $\Phi$ in \mfbounded, such that $f \leq cg$.
Moreover, we write $f \sim g$ if both, $f \lesssim g$ and $f \gtrsim g$, hold true. 
\end{convention}

\subsection{Hölder continuity and analyticity}

We recall $\Sigma$ from \eqref{eq:def_Sigma_support} and introduce the set $\HbdS \defeq \big\{ z \in \Hb \colon 2 \deltamf \leq \abs{z} \leq 10 \Sigma^{1/2}\big\}$
 and its closure $\HbdSclosed$.

\begin{pro}[Regularity of $\mf$] \label{pro:properties_of_mf}
Assume \measprop, \lowerbound, \Ltwoinf and \mfbounded.
\begin{enumerate}[(i)]
\item The restriction $\mf \colon \HbdS \to \BF$ is uniformly $1/3$-Hölder continuous, i.e., 
\begin{equation} \label{eq:Hoelder_continuity_mf}
 \norminf{\mf(z) - \mf(z')} \lesssim \abs{z-z'}^{1/3} 
\end{equation}
for all $z, z' \in \HbdS$. In particular, $\mf$ can be uniquely extended to a uniformly $1/3$-Hölder continuous function $\HbdSclosed \to \BF$,
which we also denote by $\mf$. 
\item 
The measure $\brho$ from \eqref{eq:mf_Stieltjes_representation} is absolutely continuous, i.e., there is a function 
$\brho^d\colon \Xf \times \R\setminus (-2\deltamf, 2\deltamf) \to [0,\infty), \; (x,\tau) \mapsto \brho^d_x(\tau)$ such that 
\begin{equation} \label{eq:decomposition_brho}
 \left(\brho_x|_{\R\setminus (-2\deltamf, 2\deltamf)}\right) (\di \tau) = \brho_x^d (\tau) \di \tau,\qquad\qquad \text{for all }x \in\Xf.
\end{equation}
The components $\brho_x^d$ are comparable with each other, i.e., $\brho_x^d(\tau) \sim \brho_y^d(\tau)$ for all $x, y \in \Xf$ and $\tau \in \R\setminus [-2\deltamf,2\deltamf]$. 
Moreover, the function $\brho^d\colon \R \setminus [-2\deltamf,2\deltamf] \to \BF$ is uniformly $1/3$-Hölder continuous, symmetric in $\tau$, $\brho^d(\tau) = \brho^d(-\tau)$,
and real-analytic around any $\tau \in \R\setminus [-2\deltamf, 2\deltamf]$ apart from points $\tau \in \supp \avg{\brho^d}$, where $\brho^d(\tau) =0$.
\end{enumerate}
\end{pro}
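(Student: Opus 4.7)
My overall plan for Proposition~\ref{pro:properties_of_mf} is to follow the stability analysis of Proposition~5.4 and Section~6 of \cite{AjankiQVE}, with essential modifications to handle the bipartite block structure of $\Sf$. Differentiating the QVE \eqref{eq:QVE} in $z$ yields
\[ \bigl(\id - \mf\, \Sf \,\mf\bigr)\bigl(\mf^{-1}\pt_z \mf\bigr) = \mf, \]
where $\mf$ is treated as a multiplication operator on $\BF$. The desired $1/3$-Hölder continuity \eqref{eq:Hoelder_continuity_mf} will follow from a bound of the form $\norminf{\pt_z \mf(z)} \lesssim (\Im z)^{-2/3}$ on $\HbdS$, obtained by controlling the smallest singular value of the stability operator $\id - \mf\Sf\mf$: away from singular points of $\avg{\brho^d}$ it is of order one (yielding real-analyticity there), while near a singular point a single small eigenvalue emerges whose behavior is governed by a scalar cubic equation. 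Integration then gives part~(i) on $\HbdS$, and uniform continuity extends $\mf$ to $\HbdSclosed$. The basic pointwise bound $\norminf{\mf}\lesssim 1$ on which the whole argument rests is supplied by \mfbounded.

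\textbf{The main obstacle}, flagged in the introduction, is that $\Sf$ fails to be uniformly primitive because of its block form \eqref{eq:def_Sf}, so that $\id - \mf\Sf\mf$ carries an extra small eigenvalue in the direction of the vector $\ffm$ associated with $\emin$, on top of the one from cubic cusps. My plan to handle this is to exploit the identity \eqref{eq:scalar_emin_m}, which explicitly pins down $\scalar{\emin}{\mf(z)} = -\avg{\emin}/z$ and hence fixes the $\ffm$-component of $\pt_z\mf$ in a uniformly bounded way on $\HbdS$. With this component removed by hand, the remaining cubic analysis runs on $\ffm^\perp$, where \lowerbound (applied to $SS^t$ and $S^tS$) supplies the uniform primitivity of $\Sf^2$ required by the arguments of \cite{AjankiQVE}. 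The genuinely new input needed here is to show that the cubic and quadratic coefficients of the reduced scalar cubic equation do not vanish simultaneously; this is the detailed analysis announced in the introduction for Theorem~\ref{thm:prop_selfcon_density_states}, and I expect it to be the hardest step.

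For part~(ii), the identification $\brho_x^d(\tau) = \pi^{-1}\Im\mf_x(\tau)$ and absolute continuity on $\R\setminus(-2\deltamf,2\deltamf)$ will follow from Stieltjes inversion once part~(i) and \mfbounded supply bounded continuous boundary values of $\mf$; the Hölder regularity of $\brho^d\colon \R\setminus[-2\deltamf,2\deltamf] \to \BF$ is then inherited from that of $\mf$. The symmetry $\brho^d(-\tau) = \brho^d(\tau)$ will come from observing that $z \mapsto -\overline{\mf(-\bar z)}$ also solves \eqref{eq:QVE} with positive imaginary part, so by uniqueness (Proposition~\ref{pro:existence_and_uniqueness}) it equals $\mf(z)$, and taking boundary values yields the claim. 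The comparability $\brho^d_x \sim \brho^d_y$ will follow from taking the imaginary part of the QVE to obtain $\Im\mf = \abs{\mf}^2 \Sf\,\Im\mf$, iterating with the positivity-preservation of $\Sf$, and invoking \lowerbound together with $\norminf{\mf}\lesssim 1$ to conclude $\Im\mf_x \sim \avg{\Im\mf}$ uniformly whenever the latter is positive.

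Finally, real-analyticity of $\brho^d$ around any $\tau\in \R\setminus[-2\deltamf,2\deltamf]$ outside $\pt\supp\avg{\brho^d}$ is obtained from the implicit function theorem applied to \eqref{eq:QVE}: where $\avg{\brho^d(\tau)}>0$, comparability forces $\Im\mf_x(\tau)>0$ componentwise, making the stability operator invertible in a complex neighborhood of $\tau$ and giving analyticity of $\mf$, hence of $\brho^d$; where $\tau$ lies in the interior of the complement of $\supp\avg{\brho^d}$, the analogous implicit function argument for the real solution of the QVE applies and yields $\brho^d \equiv 0$ there.
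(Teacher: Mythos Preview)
Your plan for part~(ii) is essentially what the paper does and is fine. The problem is part~(i): you have misplaced the cubic analysis. The non-vanishing of the cubic and quadratic coefficients (the ``genuinely new input'' you flag as the hardest step) is \emph{not} used in the proof of Proposition~\ref{pro:properties_of_mf}; it enters only later, in Proposition~\ref{pro:singularities_mf}, to pin down the precise square-root and cubic-root shapes. For the $1/3$-H\"older continuity the paper bypasses the cubic equation entirely and instead proves the crude operator bound
\[
\norminf{\Bf^{-1}(z)} \;\lesssim\; \avg{\Im \mf(z)}^{-2}
\]
directly (Lemma~\ref{lem:bound_Bf}); this is then fed into the argument of Proposition~7.1 in \cite{AjankiQVE}, which needs nothing more.

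The reason the extra direction $\ffm$ does \emph{not} obstruct this bound, and therefore needs no separate treatment via \eqref{eq:scalar_emin_m}, is the following. On $\HbdS$ one has $\abs{\Re z}\gtrsim 1$ (or $\Im z \gtrsim 1$), and taking the real part of \eqref{eq:QVE} gives $\normtwo{\Re\mf/\abs{\mf}} \gtrsim \abs{\Re z}$. Hence $\abs{\mf}^2/\mf^2$ stays uniformly away from $-1$, so $\Bf = \abs{\mf}^2/\mf^2 - \Ff$ acting on $\ffm$ produces $(\abs{\mf}^2/\mf^2 + \normtwo{\Ff})\ffm$, which is \emph{not} small. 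In other words, on this domain $\ffm$ is not an unstable direction of $\Bf$ at all; the only potentially small eigenvalue is the one coming from $\ffp$, and that is controlled by $\avg{\Im\mf}$ exactly as in \cite{AjankiQVE}. Your proposed route --- project out $\ffm$ by hand using \eqref{eq:scalar_emin_m} and run the cubic on $\ffm^\perp$ --- would also run into the technical snag that $\Bf$ does not preserve $\ffm^\perp$ (the multiplication by $\abs{\mf}^2/\mf^2$ does not commute with the projection), so the reduction is not clean. The paper's direct argument avoids all of this.
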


A similar result has been obtained in Theorem 2.4 in \cite{AjankiQVE} essentially relying on the uniform primitivity assumption \textbf{A3} in \cite{AjankiQVE}. 
For discrete $\Xfone$ and $\Xftwo$ without assuming~\mfbounded, Lemma 3.8 in \cite{AltGram} shows Hölder continuity of $\avg{\mf}$ instead of $\mf$ with a smaller exponent than $1/3$. 
Both conditions, \textbf{A3} in \cite{AjankiQVE} and the discreteness of $\Xfone$ and $\Xftwo$, are violated in our setup.
However, based on the proof of Theorem 2.4 in \cite{AjankiQVE}, we now explain how to extend the arguments of \cite{AjankiQVE} and \cite{AltGram} to show Proposition~\ref{pro:properties_of_mf}. 

\begin{lem} \label{lem:prop_mf_condition_mfbounded}
Uniformly for all $z \in \HbdS$, we have
\begin{align} 
\abs{\mf(z)} & \sim 1,\label{eq:mf_order_one} \\
 \Im \mf(z) & \sim \avg{\Im \mf(z)}. \label{eq:Im_mf_sim_avg_Im_mf}
\end{align}
\end{lem}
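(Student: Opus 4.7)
My plan is to handle the two comparisons in \eqref{eq:mf_order_one} and \eqref{eq:Im_mf_sim_avg_Im_mf} separately, both starting from the QVE \eqref{eq:QVE}.

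For \eqref{eq:mf_order_one}, the upper bound $\norminf{\mf(z)} \leq \Phi$ is exactly \mfbounded, since $\abs{z} \geq 2\deltamf$ on $\HbdS$. For the lower bound I would take moduli in \eqref{eq:QVE}:
\[ \frac{1}{\abs{\mf_x(z)}} = \absB{z + (\Sf\mf(z))_x} \leq \abs{z} + \norminfop{\Sf}\norminf{\mf(z)} \leq 10\Sigma^{1/2} + \norminfop{\Sf}\Phi, \]
uniformly for $x \in \Xf$ and $z \in \HbdS$, so $\abs{\mf(z)} \gtrsim 1$.

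For \eqref{eq:Im_mf_sim_avg_Im_mf}, taking the imaginary part of \eqref{eq:QVE} and multiplying by $\abs{\mf}^2$ gives
\[ \Im \mf = \abs{\mf}^2 \Im z + \abs{\mf}^2 \Sf \Im \mf, \]
which by \eqref{eq:mf_order_one} reads $\Im \mf \sim \Im z + \Sf \Im \mf$ pointwise with two-sided constants. For the lower bound $\Im \mf \gtrsim \avg{\Im \mf}$, I would first iterate $\Im \mf \gtrsim \Sf \Im \mf$ into $\Im \mf \gtrsim \Sf^{2L}\Im \mf$ for any $L \in \N$, and then exploit $\Sf^2 = \diag(SS^t, S^tS)$ together with \lowerbound, for $L$ large relative to $L_1, L_2$, to obtain $(\Im \mf)_k \gtrsim \avg{\Im\mf|_{\Xfone}}_1$ on $\Xfone$ and $(\Im \mf)_q \gtrsim \avg{\Im\mf|_{\Xftwo}}_2$ on $\Xftwo$. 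Next I would average the cross-coupling $\Im \mf|_{\Xfone} \gtrsim S\Im \mf|_{\Xftwo}$ over $\Xfone$: since $S\id$ and $S^t\id$ are bounded above by \eqref{eq:kernel_bound_inf_to_inf} and below as a consequence of \lowerbound applied with $u = \id$ (factoring $(SS^t)^{L_1}\id$ as $S\cdot w$ with bounded $w$), this yields $\avg{\Im\mf|_{\Xfone}}_1 \sim \avg{\Im\mf|_{\Xftwo}}_2$, and \measprop then forces both to be comparable to $\avg{\Im \mf}$.

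The upper bound $\Im \mf \lesssim \avg{\Im \mf}$ will be the main obstacle: a naive Neumann iteration of $\Im \mf = \abs{\mf}^2(\Im z + \Sf \Im \mf)$ does not close, because $\abs{\mf}^2 \Sf$ generically has spectral radius close to $1$ near the spectrum. The decisive input is \Ltwoinf, combined with the interpolation $\avg{\abs{v}^2}_i \leq \norminf{v}\avg{\abs{v}}_i$, which together produce
\[ (S\Im\mf|_{\Xftwo})_k \leq \Psi_1\bigl(\avg{(\Im\mf|_{\Xftwo})^2}_2\bigr)^{1/2} \leq \Psi_1\sqrt{\norminf{\Im\mf}\avg{\Im\mf|_{\Xftwo}}_2} \lesssim \sqrt{\norminf{\Im\mf}\avg{\Im\mf}} \]
and the analogous bound for $S^t$. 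Inserting this into $\Im \mf \lesssim \Im z + \Sf \Im \mf$ and using $\Im z \lesssim \avg{\Im \mf}$ (from averaging the pointwise $\Im \mf \geq c\Im z$) produces $M \lesssim V + \sqrt{MV}$ with $M \defeq \norminf{\Im \mf}$ and $V \defeq \avg{\Im \mf}$. Setting $R = M/V$, the inequality becomes $R \lesssim 1 + \sqrt{R}$, which self-improves to $R \lesssim 1$ and completes the proof.
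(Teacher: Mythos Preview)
Your argument is correct. The paper does not give a self-contained proof here; it simply defers to the proof of Lemma~5.4 in \cite{AjankiQVE}, with the remark that the uniform primitivity assumption there is to be replaced by condition~(B') of \cite{AltGram}, itself a consequence of \lowerbound. Your treatment of \eqref{eq:mf_order_one} and of the lower bound in \eqref{eq:Im_mf_sim_avg_Im_mf} --- iterating through $\Sf^2=\diag(SS^t,S^tS)$, invoking \lowerbound, and coupling the two block averages via the pointwise bound $S\id,S^t\id\gtrsim 1$ that you correctly extract from \lowerbound --- is essentially what those references do.

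Your upper bound $\Im\mf\lesssim\avg{\Im\mf}$ takes a somewhat different route. The cited arguments rewrite the imaginary-part equation as $(1-\Ff)(\Im\mf/\abs{\mf})=\abs{\mf}\Im z$ and exploit the Perron--Frobenius structure and spectral gap of $\Ff$ (away from $\ffp$, and in this block setting also $\ffm$) to control $\Im\mf/\abs{\mf}$ by its $\ffp$-component. You instead use \Ltwoinf together with the interpolation $\avg{v^2}_i\le\norminf{v}\avg{v}_i$ to close a quadratic inequality in $\norminf{\Im\mf}/\avg{\Im\mf}$ without ever invoking $\Ff$. This is more elementary and self-contained; note, however, that the paper's proof sketch lists only \lowerbound, \mfbounded and \eqref{eq:QVE} as inputs, so your reliance on \Ltwoinf --- which is a standing assumption in the section anyway --- marks a genuine, if minor, methodological difference.
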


Using the arguments in the proof of Lemma 5.4 in \cite{AjankiQVE}, Lemma~\ref{lem:prop_mf_condition_mfbounded} follows immediately from 
\lowerbound, \mfbounded and \eqref{eq:QVE}. 
Here, as in the proof of Lemma~3.1 in~\cite{AltGram}, 
the uniform primitivity assumption \textbf{A3} of \cite{AjankiQVE} has to be replaced by (B') in \cite{AltGram}, 
which is a direct consequence of \lowerbound.

The Hölder continuity and the analyticity of $\mf$ and hence $\brho^d$ will be consequences of analyzing the perturbed~QVE 
\begin{equation} \label{eq:perturbed_QVE}
- \frac{1}{\gf} = z + \Sf\gf + \df 
\end{equation}
for $z \in \Hb$ 
and $\df = z - z'$ as well as the stability operator $\Bf$ defined through
\begin{equation} \label{eq:def_Bf}
 \Bf(z)\uf = \frac{\abs{\mf(z)}^2}{\mf(z)^2}\uf - \Ff(z)\uf, 
\end{equation}
where $\Ff(z) \colon \BF \to \BF$ is defined through $\Ff(z) \uf = \abs{\mf(z)} \Sf \left( \abs{\mf(z)} \uf\right)$ 
for any $\uf \in \BF$ (cf. \cite{AjankiQVE,AltGram}). 
Correspondingly, we introduce $F(z) \colon \BFtwo \to \BFone$ via $F(z) w = \abs{m_1(z)} S (\abs{m_2(z)} w )$ 
for $w \in \BFtwo$ and $F^t(z) \colon \BFone \to \BFtwo$ via $F^t(z) u = \abs{m_2(z)} S^t (\abs{m_1(z)} u )$ 
for $u \in \BFone$.

To formulate the key properties of $\Ff$ and $\Bf$, we now introduce some notation. 
The operator norms for operators on $\BF$ and $L^2(\meas)$ are denoted by $\norminf{\genarg}$ and $\normtwo{\genarg}$, respectively. 
If $T \colon L^2 \to L^2$ is a compact self-adjoint operator then the \emph{spectral gap} $\Gap(T)$ is the difference between the 
two largest eigenvalues of $\abs{T}$. 
We remark that $S$ and hence $FF^t$ are compact operators due to \Ltwoinf. 

\begin{lem}[Properties of $\Ff$] \label{lem:prop_Ff}
The eigenspace of $\Ff$ associated to $\normtwo{\Ff}$ is one-dimensional and spanned by a unique $L^2(\meas)$-normalized positive
$\ffp \in \BF$. The eigenspace associated to $-\normtwo{\Ff}$ is one-dimensional and spanned by $\ffm \defeq \ffp \emin\in\BF$. 
We have 
\begin{equation}
 \ffp \sim 1 \label{eq:ffp_sim_1}
\end{equation}
uniformly for $z \in \HbdS$. 
There is $\eps \sim 1$ such that 
\begin{equation} \label{eq:Gap_F} 
\normtwo{\Ff \uf} \leq (\normtwo{\Ff}-\eps) \normtwo{\uf}
\end{equation}
uniformly for $z \in\HbdS$ and for all $\uf \in \BF$ satisfying $\scalar{\ffp}{\uf} = 0$ and $\scalar{\ffm}{\uf} = 0$.
Furthermore, we have $\normtwo{\Ff}\leq 1$, $\Gap(F(z)F^t(z)) \sim 1$ uniformly for $z \in \HbdS$.
\end{lem}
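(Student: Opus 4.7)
The plan is to exploit the block structure
\[ \Ff = \begin{pmatrix} 0 & F \\ F^t & 0 \end{pmatrix}, \qquad \Ff^2 = \diag(FF^t, F^tF), \]
inherited from $\Sf$ via \eqref{eq:def_Sf}, and to reduce the spectral analysis of $\Ff$ to Perron--Frobenius theory for $FF^t$ and $F^tF$. The block form makes the spectrum of $\Ff$ symmetric around zero, realized by the anticommuting involution $(u_1, u_2) \mapsto (u_1, -u_2)$ on $\BFone \oplus \BFtwo$. By \Ltwoinf together with $\abs{\mf} \sim 1$ from \eqref{eq:mf_order_one}, both $FF^t$ and $F^tF$ are compact, self-adjoint, and positivity preserving on $L^2(\meas)$. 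Moreover, \lowerbound combined with $\abs{\mf} \sim 1$ upgrades to a uniform primitivity estimate $((FF^t)^{L_1} u)_k \gtrsim \avg{u}_1$ for all $k \in \Xfone$ and nonnegative $u \in \BFone$, and analogously for $F^tF$.

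Krein--Rutman then yields that $\normtwo{FF^t}$ is a simple eigenvalue with a unique positive eigenvector $f_1 \in \BFone$, and $\normtwo{F^tF}$ has a positive top eigenvector $f_2 \in \BFtwo$. Since $\normtwo{F^tF} = \normtwo{FF^t} = \normtwo{\Ff}^2$, I normalize so that $F f_2 = \normtwo{\Ff} f_1$ and $F^t f_1 = \normtwo{\Ff} f_2$, then rescale $\ffp \defeq (f_1, f_2)$ to $\scalar{\ffp}{\ffp} = 1$. Then $\Ff \ffp = \normtwo{\Ff} \ffp$, and the block symmetry forces $\ffm \defeq \ffp \emin$ to satisfy $\Ff \ffm = -\normtwo{\Ff} \ffm$. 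Uniqueness of the $\pm \normtwo{\Ff}$-eigenspaces reduces to simplicity of the top eigenvalues of $FF^t$ and $F^tF$: any eigenvector $(u_1, u_2)$ of $\Ff$ at $\normtwo{\Ff}$ obeys $FF^t u_1 = \normtwo{\Ff}^2 u_1$, forcing $u_1 \propto f_1$, and similarly $u_2 \propto f_2$. The bound $\normtwo{\Ff} \leq 1$ follows by taking the imaginary part of \eqref{eq:QVE} and dividing by $\abs{\mf}$ to obtain
\[ (\id - \Ff)\big(\Im \mf / \abs{\mf}\big) = \abs{\mf}\, \Im z \geq 0 \qquad \text{for } z \in \Hb, \]
and pairing with the positive eigenvector $\ffp$ using self-adjointness of $\Ff$ to deduce $(1 - \normtwo{\Ff}) \scalar{\ffp}{\Im \mf / \abs{\mf}} \geq 0$, where the scalar product is strictly positive.

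The spectral gap $\Gap(FF^t) \sim 1$ is a quantitative Perron--Frobenius consequence of the uniform primitivity bound: the $L_1$-fold iterate has a strict positivity lower bound which forces eigenvectors orthogonal to $f_1$ to have eigenvalue bounded strictly away from $\normtwo{FF^t}$. This implies $\normtwo{\Ff}^2 \geq \Gap(FF^t) \sim 1$, so $\normtwo{\Ff} \sim 1$. With $\normtwo{\Ff} \sim 1$ in hand, the pointwise bound $\ffp \sim 1$ follows from the fixed-point identity $f_1 = \normtwo{\Ff}^{-2L_1} (FF^t)^{L_1} f_1$: primitivity yields $f_1 \gtrsim \avg{f_1}_1$ from below, while iterated application of \Ltwoinf gives $\norminf{f_1} \lesssim \norm{f_1}_2 \lesssim 1$ from above. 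The individual $L^2$-norms $\norm{f_1}_2, \norm{f_2}_2 \sim 1$ come from the $L^2(\meas)$-normalization of $\ffp$, \measprop, and the relation $F f_2 = \normtwo{\Ff} f_1$ combined with $\abs{\mf}\sim 1$ and the $L^2$-boundedness of $S$. Finally, for the gap on $\Ff$: any eigenvector $\uf = (u_1, u_2)$ of $\Ff$ with $\scalar{\ffp}{\uf} = \scalar{\ffm}{\uf} = 0$ decomposes into $u_1 \perp f_1$ in $L^2(\measone)$ and $u_2 \perp f_2$ in $L^2(\meastwo)$, so $\normtwo{\Ff \uf}^2 \leq (\normtwo{\Ff}^2 - \Gap(FF^t)) \normtwo{\uf}^2$, which yields the spectral gap with $\eps \sim 1$.

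The main technical obstacle will be propagating the $L^2$-level primitivity bound into the pointwise estimate $\ffp \sim 1$. Concretely, one must show that $\ffp$ does not concentrate in a single block, i.e., that $\norm{f_1}_2$ and $\norm{f_2}_2$ are individually of order one; this requires carefully combining \measprop with the singular-value relations $F f_2 = \normtwo{\Ff} f_1$ and $F^t f_1 = \normtwo{\Ff} f_2$ together with the uniform bound $\abs{\mf}\sim 1$ and the $L^2$-boundedness of $S$, $S^t$ inherited from \Ltwoinf and \eqref{eq:kernel_bound_inf_to_inf}.
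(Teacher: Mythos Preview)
Your proposal is correct and reconstructs essentially the argument that the paper invokes by citing Lemma~3.3 of \cite{AltGram} with $r=\abs{\mf}$: reduce the spectral analysis of $\Ff$ to quantitative Perron--Frobenius for the compact primitive operators $FF^t$ and $F^tF$ via the block structure and $\abs{\mf}\sim 1$, obtain $\normtwo{\Ff}\leq 1$ from the imaginary part of \eqref{eq:QVE}, and read off $\ffp\sim 1$ and the gap from the primitivity lower bound and \Ltwoinf. One minor imprecision: in your last step you phrase the gap estimate for eigenvectors of $\Ff$, but \eqref{eq:Gap_F} is claimed for all $\uf\perp\ffp,\ffm$; this follows immediately from the spectral theorem for the compact self-adjoint $\Ff^2=\diag(FF^t,F^tF)$ once you have $\Gap(FF^t)=\Gap(F^tF)\sim 1$, so no change of strategy is needed.
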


Lemma~\ref{lem:prop_Ff} is a consequence of the proof of Lemma 3.3 in \cite{AltGram} with $r = \abs{\mf}$ and \eqref{eq:mf_order_one}.

\begin{lem} \label{lem:bound_Bf}
Uniformly for $z \in \HbdS$, we have
\begin{equation} \label{eq:correct_estimate_norm_Bf_inverse}
 \norminf{\Bf^{-1}(z)} \lesssim \frac{1}{\avg{\Im \mf(z)}^{2}}. 
\end{equation}
\end{lem}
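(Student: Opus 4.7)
The plan is to adapt the stability analysis of \cite{AjankiQVE,AjankiCPAM} to the present setting, where $\Ff(z)$ has two extremal eigenvectors $\ffp$ and $\ffm$ rather than only one. Since $\Ff$ is self-adjoint on $L^2(\meas)$ with eigenvalues $\pm\normtwo{\Ff}$ on these vectors, $\ffp$ and $\ffm$ are automatically $L^2(\meas)$-orthogonal (assuming $\normtwo{\Ff} > 0$, which holds by Lemma~\ref{lem:prop_Ff}). I would first establish an $L^2$-bound on $\Bf^{-1}$ and then boost it to $\BF$ using \Ltwoinf. The main obstacle, discussed last, is the non-degeneracy of a $2\times 2$ determinant.

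For $\uf\in\BF$ and $\wf\defeq\Bf\uf$, decompose $\uf = \alpha_+\ffp + \alpha_-\ffm + \uf^\perp$ with $\uf^\perp$ orthogonal in $L^2(\meas)$ to both $\ffp$ and $\ffm$. On $\{\ffp,\ffm\}^\perp$ the spectral gap \eqref{eq:Gap_F} together with $|(\abs{\mf}/\mf)^2|\equiv 1$ yields $\normtwo{\uf^\perp}\lesssim\normtwo{\wf} + \avg{\Im\mf}(|\alpha_+|+|\alpha_-|)$, where the small factor stems from $(\abs{\mf}/\mf)^2 - 1 = O(\avg{\Im\mf})$. Projecting $\Bf\uf = \wf$ onto $\ffp$ and $\ffm$ then gives a $2\times 2$ linear system for $(\alpha_+,\alpha_-)$. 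Using the pointwise identity
\[ \frac{\abs{\mf}^2}{\mf^2} = 1 - 2\frac{(\Im\mf)^2}{\abs{\mf}^2} - 2\ii\frac{\Re\mf\,\Im\mf}{\abs{\mf}^2}, \]
together with $\Im\mf \sim \avg{\Im\mf}\ffp$ (from \eqref{eq:Im_mf_sim_avg_Im_mf} and \eqref{eq:ffp_sim_1}) and $\abs{\mf}\sim 1$, the matrix entries read: the $(+,+)$-entry has real part of order $-\avg{\Im\mf}^2$ and imaginary part at most of order $\avg{\Im\mf}$; the $(-,-)$-entry is bounded below by an order-one constant (approximately $2$) because $\Re[(\abs{\mf}/\mf)^2] = 1 - 2(\Im\mf/\abs{\mf})^2$; the two off-diagonal entries are equal by self-adjointness and of order $\avg{\Im\mf}$.

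The decisive step is to show $|\det|\gtrsim\avg{\Im\mf}^2$ uniformly. Since the product of the off-diagonal entries is of order $\avg{\Im\mf}^2$, the same size as the diagonal product, one must rule out exact cancellation. In the generic regime where the imaginary part of the $(+,+)$-entry is of order $\avg{\Im\mf}$, the determinant inherits an imaginary part of order $\avg{\Im\mf}$, so $|\det|\sim\avg{\Im\mf}$; in the degenerate cusp regime this imaginary part vanishes and both diagonal and off-diagonal products become real and of order $\avg{\Im\mf}^2$, so one must perform the second-order expansion of $(\abs{\mf}/\mf)^2$ explicitly and invoke the Gram-specific identity \eqref{eq:scalar_emin_m} to verify that no exact cancellation occurs. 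Solving the system then yields $|\alpha_+|\lesssim\avg{\Im\mf}^{-2}\normtwo{\wf}$ and $|\alpha_-|\lesssim\normtwo{\wf}$, whence $\normtwo{\uf}\lesssim\avg{\Im\mf}^{-2}\normtwo{\wf}$.

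To conclude, rewrite $\Bf\uf = \wf$ as $\abs{\mf}\Sf(\abs{\mf}\uf) = (\abs{\mf}/\mf)^2\uf - \wf$. Since $|(\abs{\mf}/\mf)^2|\equiv 1$ pointwise, $\abs{\mf}\sim 1$, and \Ltwoinf yields that $\Sf\colon L^2(\meas)\to\BF$ is continuous, we obtain $\norminf{\uf}\lesssim\normtwo{\uf} + \norminf{\wf}$. Combined with the $L^2$-bound above and $\normtwo{\wf}\leq\norminf{\wf}$ this proves \eqref{eq:correct_estimate_norm_Bf_inverse}. The hard part is the non-degeneracy of the $2\times 2$ determinant in the cusp regime: the additional unstable direction $\ffm$ threatens to degrade the bound below $\avg{\Im\mf}^{-2}$, and excluding this scenario requires the detailed analysis of the coefficient structure that is the main novelty of the paper compared to \cite{AltGram}.
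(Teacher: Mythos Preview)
Your approach differs substantially from the paper's, and the decisive step---the $2\times 2$ determinant bound---is left unfinished. The paper does \emph{not} decompose along $\ffp,\ffm$; instead it exploits the domain constraint $|z|\geq 2\deltamf$ and reduces to estimates already available from \cite{AltGram}. Taking the real part of \eqref{eq:QVE} yields $(1+\Ff)(\Re\mf/|\mf|)=-|\mf|\Re z$; since $\|(1+\Ff)\uf\|_2\leq 2\|\uf\|_2$ and $\|\mf\|_2\gtrsim 1$, one gets $\|\Re\mf/|\mf|\|_2\gtrsim|\Re z|$. Feeding this lower bound into the machinery of Lemma~3.5 in \cite{AltGram} (with $\Gap(FF^t)\sim 1$ from Lemma~\ref{lem:prop_Ff} as the gap input) produces $\norminf{\Bf^{-1}}\lesssim(\Re z)^{-2}\avg{\Im\mf}^{-2}$. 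Separately, the trivial bound $\normtwo{\Bf^{-1}}\leq(1-\normtwo{\Ff})^{-1}\lesssim(\Im z)^{-1}$, upgraded to $\BF$ via your own bootstrap in the last paragraph, handles the region $|\Im z|\gtrsim 1$. Since $|z|\geq 2\deltamf$ forces $\max\{|\Re z|,|\Im z|\}\gtrsim 1$, one of the two bounds always delivers \eqref{eq:correct_estimate_norm_Bf_inverse}. In particular, the proof of Lemma~\ref{lem:bound_Bf} does \emph{not} invoke the novel coefficient analysis of Lemma~\ref{lem:stab_cubic}; that lemma concerns the coefficients $\mu_2,\mu_3$ of the cubic \eqref{eq:general_cubic} and enters only afterwards.

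Regarding your determinant: in the regime $\sigma=\avg{\pf\ffp^3}\approx 0$ a leading-order computation along your lines gives
\[
\Re\det \;\approx\; 4\alpha^2\big(\scalar{\ffm}{\pf\ffp^2}^2-\sigma^2-\avg{\ffp^4}\big),
\]
so the nonvanishing you need is essentially $\avg{\ffp^4}-\scalar{\ffm}{\pf\ffp^2}^2\gtrsim 1$, i.e.\ $\|\Qfpm(\pf\ffp^2)\|_2^2\gtrsim 1$, which is indeed the content of Lemma~\ref{lem:stab_cubic} via \eqref{eq:lower_bound_quad_form}. Your intuition is therefore not wrong, but invoking it here inverts the paper's logical order: Lemma~\ref{lem:stab_cubic} is stated on the closure $\HbdSclosed$, which is reached only through the H\"older continuity that itself rests on Lemma~\ref{lem:bound_Bf}. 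One could in principle rearrange (the proof of Lemma~\ref{lem:stab_cubic} uses only Lemmas~\ref{lem:prop_mf_condition_mfbounded} and~\ref{lem:prop_Ff}, which hold on the open set), but the paper's $|\Re z|$ versus $|\Im z|$ dichotomy sidesteps all of this.
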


\begin{proof} 
We describe the modifications in the proof of Lemma 3.5 in \cite{AltGram} necessary to obtain~\eqref{eq:correct_estimate_norm_Bf_inverse}. 
We remark that (3.10) in \cite{AltGram} holds true due to \Ltwoinf.

Let $z \in \HbdS$.
Taking the real part in \eqref{eq:QVE}, using \eqref{eq:mf_order_one} and Lemma \ref{lem:prop_Ff}, we obtain the bound $\normtwoa{\Re \mf\abs{\mf}^{-1}} \geq \abs{\Re z} \normtwo{\mf}/2 \gtrsim \abs{\Re z}$.
Therefore, using $\avg{(\Im \mf)^2} \geq \avg{\Im \mf}^2$ by Jensen's inequality, we obtain (3.27) in \cite{AltGram} with $\kappa = 2$. Employing $\Gap\left(F(z)F^t(z)\right) \sim 1$, 
we get  $\norminf{\Bf^{-1}(z)} \lesssim (\Re z)^{-2} \avg{\Im \mf(z)}^{-2}$. 
As $\normtwo{\Bf^{-1} (z) } \leq (1- \normtwo{\Ff(z)})^{-1} \lesssim (\Im z)^{-1}$ by (3.21) in \cite{AltGram} we conclude from $\Im \mf \lesssim \min\{1, (\Im z)^{-1}\}$ that 
$\norminf{\Bf^{-1}(z)} \lesssim \abs{z}^{-2} \avg{\Im \mf(z)}^{-2}$. 
This concludes the proof of \eqref{eq:correct_estimate_norm_Bf_inverse} since $\abs{z} \geq 2\deltamf$. 
\end{proof}

Note that if $\brho$ has a density $\brho^d$ around a point $\tau_0$ then,
uniformly for $\tau$ in a neighbourhood of $\tau_0$, we have 
\begin{equation} \label{eq:relation_Im_mf_brho}
 \brho^d(\tau) = \pi^{-1} \lim_{\eta \downarrow 0} \Im \mf(\tau + \ii \eta). 
\end{equation}

\begin{proof}[Proof of Proposition \ref{pro:properties_of_mf}]
Following the proof of Proposition 7.1 in \cite{AjankiQVE} yields the uniform $1/3$-Hölder continuity of $\mf$ and $\brho^d$. 
In this proof, the estimate (5.40b) has to be replaced by \eqref{eq:correct_estimate_norm_Bf_inverse}. Furthermore, 
\eqref{eq:Im_mf_sim_avg_Im_mf} substitutes Proposition 5.3 (ii) in \cite{AjankiQVE}, in particular, $\brho^d_x(\tau) \sim \brho^d_y(\tau)$. 
We remark that now the same proofs extend Lemma \ref{lem:prop_mf_condition_mfbounded}, Lemma \ref{lem:prop_Ff} and Lemma \ref{lem:bound_Bf} to all $z \in \HbdSclosed$. 
Hence, the proof of Corollary 7.6 in \cite{AjankiQVE} yields the analyticity using \eqref{eq:relation_Im_mf_brho} for $\tau \in \R \cap \HbdSclosed$.
\end{proof}

\subsection{Singularities of $\brho^d$ and the cubic equation}

We now study the behaviour of $\brho^d$ near points $\tau \in\R$, where $\brho^d$ is not analytic. 
Theorem 2.6 in \cite{AjankiQVE} describes the density near the edges and the cusps as well as the transition between the bulk and the 
singularity regimes in a quantitative manner. 
The same results hold for $\brho^d$ as well: 

\begin{pro} \label{pro:singularities_mf}
We assume \measprop, \lowerbound, \Ltwoinf and \mfbounded.
Then all statements of Theorem 2.6 in \cite{AjankiQVE} hold true on $\R\setminus[-2\deltamf,2\deltamf]$. 
\end{pro}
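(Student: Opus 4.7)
The approach is to adapt the singularity analysis underlying Theorem 2.6 of \cite{AjankiQVE} to the present setting, where the stability operator $\Bf$ has the additional low-lying mode $\ffm=\ffp\emin$. Fix $\tau_0\in\R\setminus[-2\deltamf,2\deltamf]$, a candidate singular point of $\brho^d$. Following \cite{AjankiQVE}, I would set $\delta(z)\defeq\mf(z)-\mf(\tau_0)$, subtract the two instances of \eqref{eq:QVE} evaluated at $z$ and at $\tau_0$, and rearrange to obtain a relation of the schematic form $\Bf(\tau_0)\,\delta = (z-\tau_0)\,\mf(z)\mf(\tau_0)+Q(\delta)$, where $Q(\delta)$ collects the quadratic and higher-order corrections in $\delta$. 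By Lemma \ref{lem:prop_Ff}, the near-null space of $\Bf$ at such a singular point is two-dimensional, spanned by $\ffp$ and $\ffm$; writing $\delta=\Theta\,\ffp+\Xi\,\ffm+\delta^\perp$ and inverting $\Bf$ on the orthogonal complement using \eqref{eq:Gap_F} and Lemma \ref{lem:bound_Bf} reduces the problem to a coupled scalar system for $(\Theta,\Xi)$.

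The key new ingredient is the exact identity \eqref{eq:scalar_emin_m}, which expresses $\scalar{\emin}{\mf(z)}$ as an entirely explicit analytic function of $z$. Subtracting \eqref{eq:scalar_emin_m} at $z$ and at $\tau_0$ yields $\scalar{\emin}{\delta}=\avg{\emin}(z-\tau_0)/(z\tau_0)$, an analytic first-order quantity in $z-\tau_0$. Since $\ffm=\ffp\emin$ with $\ffp\sim 1$ by \eqref{eq:ffp_sim_1}, this forces $\abs{\Xi}\lesssim\abs{z-\tau_0}$, so the $\ffm$-mode is automatically slaved to analytic data. This is the mechanism that allows the scheme of \cite{AjankiQVE} to go through in the absence of the uniform primitivity assumption \textbf{A3} of \cite{AjankiQVE}, and reduces the effective number of unstable directions to one.

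With $\Xi$ under control, projecting the equation for $\delta$ onto $\ffp$ produces a scalar cubic equation of the shape
\[
\mu\,\Theta^3 + \sigma\,\Theta^2 + c\,(z-\tau_0) + \ord\bigl(\abs{\Theta}^4+\abs{\Theta}\abs{z-\tau_0}\bigr)=0,
\]
in parallel with the cubic equation derived in \cite{AjankiQVE}. The main obstacle, and the novel contribution flagged in the introduction, is to show that $\mu$ and $\sigma$ cannot vanish simultaneously at any admissible $\tau_0$. The block-off-diagonal structure of $\Sf$ and the identity $\ffm=\ffp\emin$ yield extra algebraic constraints on $\mu$ and $\sigma$, expressible as averages of explicit polynomials in $\mf$, $\ffp$ and $\Sf$, which a direct transliteration of the arguments of \cite{AjankiQVE} does not exploit. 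A careful computation of these averages, using \eqref{eq:mf_order_one}, \eqref{eq:Im_mf_sim_avg_Im_mf} and \eqref{eq:ffp_sim_1}, should yield a uniform lower bound $\abs{\mu}+\abs{\sigma}\gtrsim 1$ on $\HbdSclosed$.

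Once this nondegeneracy is established, the asymptotic analysis of the cubic equation proceeds as in \cite{AjankiQVE}: solving for $\Theta$ in each regime produces the $\abs{\lambda}^{1/3}$ cubic-root cusp, the $\lambda^{1/2}$ square-root edge, and the quantitative transition between them described in Theorem 2.6 of \cite{AjankiQVE}. The claimed statements for $\brho^d$ on $\R\setminus[-2\deltamf,2\deltamf]$ then follow from the relation $\brho^d=\pi^{-1}\Im\mf$ via \eqref{eq:relation_Im_mf_brho} together with $\ffp\sim 1$.
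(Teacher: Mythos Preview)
Your proposal misidentifies where the difficulty with $\ffm$ actually lies. You claim the near-null space of $\Bf$ at a singular point is two-dimensional, spanned by $\ffp$ and $\ffm$. This is false: as stated in the paper just before \eqref{eq:def_Theta_1}, $\Bf$ has a \emph{unique} eigenvalue $\beta$ of smallest modulus with a spectral gap $\gtrsim 1$, and the associated eigenspace is one-dimensional. Indeed, near a singularity $\abs{\mf}^2/\mf^2\to 1$ and $\normtwo{\Ff}\to 1$, so $\Bf\ffm\approx(1+\normtwo{\Ff})\ffm\approx 2\ffm$: the vector $\ffm$ is \emph{not} a near-null vector of $\Bf$. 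Consequently the reduction to a scalar cubic for $\Theta$ via the eigenvector $\bb\approx\ffp$ goes through exactly as in \cite{AjankiQVE}, and there is no second coordinate $\Xi$ to slave. (Your slaving argument is also flawed as written: $\scalar{\emin}{\delta}$ controls $\avg{\emin\delta}$, not $\scalar{\ffm}{\delta}=\avg{\ffp\emin\delta}$; the discrepancy $\avg{(\ffp-1)\emin\delta}$ is $\ord(\norm{\delta})$, not $\ord(\abs{z-\tau_0})$.)

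The genuine obstruction caused by $\ffm$ is elsewhere: in \cite{AjankiQVE} the lower bound on the coefficients $\mu_2,\mu_3$ of the cubic is obtained from the coercivity estimate \eqref{eq:QVE_paper_8.13} for the quadratic form $\Dquad$ on $\mathrm{ran}\,\Qfp$. Here $\Dquad(\ffm)=0$ since $(\normtwo{\Ff}+\Ff)\ffm=0$, so \eqref{eq:QVE_paper_8.13} fails and the argument of \cite{AjankiQVE} only yields $\psi=\Dquad(\pf\ffp^2)\gtrsim\normtwo{\Qfpm(\pf\ffp^2)}^2$, which may vanish. The paper's actual contribution is Lemma~\ref{lem:stab_cubic}, a direct proof that $\psi+\sigma^2\gtrsim 1$ via an explicit computation exploiting $\ffm=\emin\ffp$ and, in one of two regimes, the identity \eqref{eq:scalar_emin_m}. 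So your instinct that \eqref{eq:scalar_emin_m} is the key algebraic input is correct, but its role is inside the nondegeneracy estimate for the cubic coefficients, not in eliminating a spurious unstable mode. Your proposal gestures at this step (``a careful computation \ldots\ should yield'') without supplying it, and that computation is precisely the new content of the paper.
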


For the proof of Proposition \ref{pro:singularities_mf} we follow Chapter 8 and 9 in \cite{AjankiQVE} which contain the proof of the analogue of 
Proposition \ref{pro:singularities_mf}, Theorem 2.6 in \cite{AjankiQVE}, and describe the necessary changes as well as the main philosophy. 

The shape of the singularities of $\mf$ as well as the stability of the QVE (cf. Chapter 10 in \cite{AjankiQVE}) will be a consequence of the stability of a cubic equation.
We note that similar as in Lemma 8.1 of \cite{AjankiQVE}, the following properties of the stability operator $\Bf=\Bf(z)$ defined in \eqref{eq:def_Bf} can be proven. 
There is $\eps_*\sim 1$ such that for $z \in \HbdSclosed$ satisfying $\avg{\Im \mf(z)} \leq \eps_*$, $\Bf$ has a unique eigenvalue $\beta=\beta(z)$ of smallest modulus
and $\abs{\beta'}- \abs{\beta} \gtrsim 1$
for all $\beta' \in \spec(\Bf)\setminus \{ \beta \}$. The eigenspace associated to $\beta$ is one-dimensional and there is a unique vector $\bb=\bb(z) \in \BF$ 
in this eigenspace such that $\scalar{\bb(z)}{\ffp} = 1$.

Let $z \in \HbdSclosed$ such that $\avg{\Im \mf(z)} \leq \eps_*$ and $\gf\in \BF$ satisfy the perturbed QVE, \eqref{eq:perturbed_QVE}, at~$z$. 
We define 
\begin{equation} \label{eq:def_Theta_1}
\Theta(z) \defeq \scalara{\frac{\bar \bb(z)}{\avg{\bb(z)^2}}}{\frac{\gf -\mf(z)}{\abs{\mf(z)}}}. 
\end{equation}
By possibly shrinking $\eps_* \sim 1$, we obtain that if $\norminf{\gf-\mf(z)} \leq \eps_*$ then it can be shown as in Proposition 8.2 in \cite{AjankiQVE} that 
$\Theta$ satisfies 
\begin{equation} \label{eq:general_cubic}
 \mu_3 \Theta^3 + \mu_2 \Theta^2 + \mu_1 \Theta + \scalar{\abs{\mf}\bar \bb}{\df} = \kappa\left((\gf-\mf)/\abs{\mf}, \df\right), 
\end{equation}
where $\mu_1, \mu_2$ and $\mu_3$, which depend only on $S$ and~$z$, as well as $\kappa$ are given in \cite{AjankiQVE}. 

The main ingredient that needs to be changed in our setup is the estimate in (8.13) of \cite{AjankiQVE}. It gives a lower bound on  
the nonnegative quadratic form 
\begin{equation} \label{eq:def_mathcal_D} 
\Dquad(\wf) \defeq 
\scalara{\Qfp\wf}{\left(\normtwo{\Ff}+ \Ff\right)\left(1 - \Ff\right)^{-1} \Qfp\wf}
\end{equation}
for $\wf \in \BF$, where the projection $\Qfp$ is defined through $\Qfp \wf \defeq \wf - \scalar{\ffp}{\wf}\ffp$.
For some $c(z) >0$ and all $\wf \in \BF$, this lower bounds reads as follows
\begin{equation} \label{eq:QVE_paper_8.13}
\Dquad(\wf) \geq c(z) \normtwo{\Qfp\wf}^2.
\end{equation}
However, in our setup, owing to the second unstable direction $\ffm\perp\ffp$, $\Ff\ffm = -\normtwo{\Ff}\ffm$, we have $\Dquad(\ffm) = 0$ which contradicts \eqref{eq:QVE_paper_8.13}. 
In \cite{AjankiQVE}, the estimate \eqref{eq:QVE_paper_8.13} is only used to obtain 
\begin{equation}\label{eq:stability_cubic_general}
 \abs{\mu_3(z)} + \abs{\mu_2(z)} \gtrsim 1 
\end{equation}
(cf.  (8.34) in \cite{AjankiQVE}) for all $z \in\HbdSclosed$ satisfying $\avg{\Im \mf(z)} \leq \eps_*$ and $\norminf{\gf - \mf(z)} \leq \eps_*$ for $\eps_* \sim 1$ small enough.
In fact, it is shown above (8.50) in \cite{AjankiQVE} that  
\begin{equation} \label{eq:comparison_mu_3_mu_2_and_sigma_psi}
\abs{\mu_3} \gtrsim \psi + \ord(\alpha)\qquad \abs{\mu_2} \gtrsim \abs{\sigma} + \ord(\alpha).
\end{equation}
Here, we introduced the notations $\psi \defeq \Dquad(\pf \ffp^2)$ with $\pf \defeq \sign{\Re \mf}$ as well as $\alpha \defeq \avg{\ffp \Im \mf/\abs{\mf}}$ and $\sigma \defeq \scalar{\ffp}{\pf\ffp^2}$.
The proof used in \cite{AjankiQVE} to show \eqref{eq:comparison_mu_3_mu_2_and_sigma_psi} works in our setup as well.
Since $\alpha=\avg{\ffp\Im \mf/\abs{\mf}} \sim \avg{\Im \mf} \leq \eps_*$ by \eqref{eq:mf_order_one} 
and \eqref{eq:ffp_sim_1}, 
we conclude that $\abs{\mu_3} + \abs{\mu_2} \gtrsim \psi + \abs{\sigma}$ for $\eps_* \sim 1$ small enough. Hence, \eqref{eq:stability_cubic_general} is a consequence of 

\begin{lem}[Stability of the cubic equation]  \label{lem:stab_cubic}
There exists $\eps_* \sim 1$ such that 
\begin{equation}
 \psi(z) + \sigma^2(z) \sim 1 \label{eq:stability_cubic} 
\end{equation}
uniformly for all $z \in \HbdSclosed$ satisfying $\avg{\Im\mf(z) } \leq \eps_*$. 
\end{lem}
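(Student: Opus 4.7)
The plan is to exploit the spectral gap of $\Ff$ off the two-dimensional unstable subspace $\linspan\{\ffp,\ffm\}$ to reduce \eqref{eq:stability_cubic} to a Cauchy--Schwarz estimate, and then to rule out the degenerate equality case using the block structure of $\Sf$ together with the identity \eqref{eq:scalar_emin_m}. First, I would decompose $\pf\ffp^2 = \sigma\ffp + b\ffm + \wf_\perp$ in $L^2(\meas)$, where $b\defeq\scalar{\ffm}{\pf\ffp^2}=\avg{\emin\pf\ffp^3}$ and $\wf_\perp\in\linspan\{\ffp,\ffm\}^\perp$. Since $\ffp$ and $\ffm$ are the simple eigenvectors of $\Ff$ at $\pm\normtwo{\Ff}$, any function of $\Ff$ leaves $\linspan\{\ffp,\ffm\}^\perp$ invariant; combining \eqref{eq:Gap_F} with $\normtwo{\Ff}\leq 1$ gives $(\normtwo{\Ff}+\Ff)(1-\Ff)^{-1}\succeq c\,\id$ on this complement for some $c\sim 1$. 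Using $(\normtwo{\Ff}+\Ff)\ffm=0$ to annihilate the $\ffm$-component of $\Qfp(\pf\ffp^2)=b\ffm+\wf_\perp$ yields $\psi\gtrsim\normtwo{\wf_\perp}^2=\avg{\ffp^4}-\sigma^2-b^2$, so
\[ \psi+\sigma^2 \;\gtrsim\; \avg{\ffp^4}-b^2. \]

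Next I would rewrite $\avg{\ffp^4}-b^2=\normtwo{\ffp(\pf\ffp-b\emin)}^2$ (using $\ffm=\ffp\emin$), exploit $\ffp\sim 1$ pointwise, and minimise over scalar multiples of $\emin$ in $L^2(\meas)$:
\[ \avg{\ffp^4}-b^2 \;\gtrsim\; \avgb{(\pf\ffp-b\emin)^2} \;\geq\; 1-\avg{\emin\pf\ffp}^2, \]
where the last step uses $\normtwo{\pf\ffp}^2=\avg{\ffp^2}=\normtwo{\emin}^2=1$. It thus suffices to establish the uniform bound $\avg{\emin\pf\ffp}^2 \leq 1-c_*$ for some $c_*\sim 1$ on the range of $z$ in the lemma.

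This uniform bound is the main obstacle, because equality in $\abs{\avg{\emin\pf\ffp}}\leq\normtwo{\pf\ffp}\normtwo{\emin}=1$ is attained exactly when $\pf=\pm\emin$ pointwise and $\ffp\equiv 1$. I would split into two cases. If $\mathrm{Var}(\ffp)\defeq 1-\avg{\ffp}^2\geq c_0$ for a suitable $c_0\sim 1$, then $\abs{\avg{\emin\pf\ffp}}\leq\avg{\ffp}\leq\sqrt{1-c_0}$ and the bound follows at once. In the complementary regime $\ffp$ is $L^2$-close to the constant function $1$; the eigenvalue identity $\Ff\ffp=\normtwo{\Ff}\ffp$, propagated via the irreducibility assumption \lowerbound (which plays here the role of the uniform primitivity used in \cite{AjankiQVE}), then yields approximate pointwise information on $\abs{\mf}$ separately on each block $\Xfone$ and $\Xftwo$. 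Feeding this back into the QVE \eqref{eq:QVE} under $\pf\approx\pm\emin$, and combining with the structural identity \eqref{eq:scalar_emin_m}, would force $z$ onto an isolated value incompatible with the entire interval of $z\in\HbdSclosed$ satisfying $\avg{\Im\mf(z)}\leq\eps_*$. Carrying out this implication quantitatively and uniformly in $z$ is the delicate analytic step and the key novel input of the note; it is precisely what replaces the primitivity-based argument of \cite{AjankiQVE} in the presence of the extra unstable direction $\ffm$.
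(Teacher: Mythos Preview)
Your opening reduction coincides with the paper's: decomposing $\pf\ffp^2=\sigma\ffp+b\ffm+\wf_\perp$ and invoking the spectral gap \eqref{eq:Gap_F} yields
\[
\psi+\sigma^2 \;\gtrsim\; \normtwo{\pf\ffp^2-b\ffm}^2 \;=\;\avgb{\ffp^2(\pf\ffp-b\emin)^2}, \qquad b=\avg{\emin\pf\ffp^3},
\]
which is exactly \eqref{eq:lower_bound_cubic_aux2}. After this point the two arguments diverge, and your second half contains a genuine gap.

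Your next move is to drop the weight $\ffp^2$ and minimise over the scalar in front of $\emin$, reducing matters to $\abs{\avg{\emin\pf\ffp}}\leq 1-c_*$. The case $\var(\ffp)\geq c_0$ is fine, but in the complementary case you only offer a sketch: irreducibility should make $\abs{\mf}$ roughly blockwise constant, and then the QVE together with \eqref{eq:scalar_emin_m} should ``force $z$ onto an isolated value''. Even granting this, an isolated-value conclusion does not produce the required \emph{uniform} lower bound at or near that value. More concretely, there is no evident way, from $\Ff\ffp=\normtwo{\Ff}\ffp$ with $\ffp$ nearly constant, to control the oscillation of $\abs{\Re\mf}$ well enough to transfer the exact identity $\avg{\emin\pf\,\abs{\Re\mf}}=-\avg{\emin}\Re(1/z)$ into a bound on $\avg{\emin\pf\ffp}$; the minimisation step has discarded precisely the structure that makes this work.

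The paper bypasses this by two short explicit tricks, neither of which appears in your proposal. First, instead of dropping the weight it \emph{replaces} $\ffp^2$ by $(\Re\mf)^2$ (legitimate since $\ffp\sim 1\sim\abs{\Re\mf}$ after shrinking $\eps_*$) and applies Jensen; the gain is the identity $\pf\,\Re\mf=\abs{\Re\mf}$, which converts the sign pattern $\pf$ into a strictly positive factor and yields a main term $\avg{\ffp\abs{\Re\mf}}\sim 1$ plus a correction proportional to $\avg{\emin}\Re(1/z)$ via \eqref{eq:scalar_emin_m}. This settles the regime of small $\abs{\avg{\emin}}$. Second, for $\abs{\avg{\emin}}$ bounded below, the paper uses only the orthogonality $\avg{\emin\ffp^2}=\scalar{\ffm}{\ffp}=0$: writing $\yf=\emin\pf\ffp$ one shows $\psi+\sigma^2\gtrsim\avg{(\ffp^2-1)^2}$, and then $\avg{\emin}^2=\avg{\emin(1-\ffp^2)}^2\leq\avg{(\ffp^2-1)^2}$. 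No appeal to the QVE beyond \eqref{eq:scalar_emin_m}, and no irreducibility, is needed here. So the ``key novel input'' is the $\Re\mf$-weighted Jensen step combined with this orthogonality trick, not the mechanism you describe.
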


\begin{proof}
We first remark that due to \eqref{eq:mf_order_one}, \eqref{eq:Im_mf_sim_avg_Im_mf} and possibly shrinking $\eps_*\sim 1$ we can assume 
\begin{equation} \label{eq:real_part_sim_1}
 \abs{\Re \mf(z)} \sim 1
\end{equation}
for $z\in\HbdSclosed$ satisfying $\avg{\Im \mf(z)} \leq \eps_*$.
Second, owing to \eqref{eq:Gap_F}, for all $\wf \in\BF$, we have the following analogue of \eqref{eq:QVE_paper_8.13} 
\begin{equation} \label{eq:lower_bound_quad_form}
\Dquad(\wf) \gtrsim \normtwo{\Qfpm\wf}^2,
\end{equation}
where $\Qfpm$ is the projection onto the orthogonal complement of $\ffp$ and $\ffm$, i.e, $\Qfpm\wf = \wf - \scalar{\ffp}{\wf}\ffp - \scalar{\ffm}{\wf} \ffm$. 
Note that \eqref{eq:Gap_F} also yields the upper bound $\Dquad(\wf)\lesssim \normtwo{\Qfp\wf}^2$ and hence the upper bound in \eqref{eq:stability_cubic} by \eqref{eq:ffp_sim_1}.
Therefore, it suffices to prove the lower bound in \eqref{eq:stability_cubic}.
A straightforward computation starting from \eqref{eq:lower_bound_quad_form} and using $\ffm= \emin\ffp$ yields
\begin{equation} \label{eq:lower_bound_cubic_aux2}
 \psi +\sigma^2 = \mathcal D(\pf\ffp^2) + \avg{\pf\ffp^3}^2 \gtrsim \normtwo{\pf\ffp^2 - \scalar{\ffm}{\pf\ffp^2}\ffm}^2 =  \avga{\ffp^2\left(\pf\ffp - \avg{\pf\emin\ffp^3} \emin\right)^2}.
\end{equation}
Using \eqref{eq:ffp_sim_1}, \eqref{eq:real_part_sim_1} and $\abs{\Re \mf} = \pf \Re \mf$, we conclude
\begin{align}
\psi + \sigma^2 & \gtrsim \avga{ \left(\Re \mf\right)^2 \left( \pf\ffp  - \avg{\pf\emin\ffp^3} \emin\right)^2} \nonumber \\ 
 & \geq \avg{\ffp\abs{\Re \mf}}\left(\avg{\ffp\abs{\Re \mf}} + 2 \avg{\pf\emin\ffp^3} \avg{\emin}\Re \frac{1}{z} \right) \label{eq:lower_bound_cubic_aux6}
\end{align}
Here, we employed Jensen's inequality and \eqref{eq:scalar_emin_m} in the second step. 
Since $z \in \HbdSclosed$ and $\avg{\emin} = 0 $ for $\measone(\Xfone)=\meastwo(\Xftwo)$, there exists $\iota_* \sim 1$ such that 
the last factor on the right-hand side of \eqref{eq:lower_bound_cubic_aux6} is bounded from below by $\avg{\ffp\abs{\Re\mf}}/2$
for all $z \in \HbdSclosed$ and $\abs{\measone(\Xfone)-\meastwo(\Xftwo)} \leq \iota_* (\measone(\Xfone) + \meastwo(\Xftwo))$. 
Since $\avg{\ffp\abs{\Re \mf}}^2 \gtrsim 1$ by \eqref{eq:ffp_sim_1} and \eqref{eq:real_part_sim_1},
this finishes the proof of \eqref{eq:stability_cubic} for  $\abs{\measone(\Xfone)-\meastwo(\Xftwo)} \leq \iota_* (\measone(\Xfone) + \meastwo(\Xftwo))$.
For the proof of \eqref{eq:stability_cubic} in the remaining regime, $\abs{\measone(\Xfone)-\meastwo(\Xftwo)} > \iota_* (\measone(\Xfone) + \meastwo(\Xftwo))$, we introduce
 $\yf\defeq \emin \pf \ffp$ and use $\yf^2 = \ffp^2 \sim 1$ and $\left(\yf+ \avg{\yf^3}\right)^2 \lesssim 1$ by \eqref{eq:ffp_sim_1} to obtain 
 from \eqref{eq:lower_bound_cubic_aux2} the bound
\begin{equation} \label{eq:lower_bound_cubic_aux4}
 \psi +\sigma^2 \gtrsim  \avga{\left(\yf-\avg{\yf^3}\right)^2\left(\yf+\avg{\yf^3}\right)^2}
=\avga{\left((\yf^2-1)+(1-\avg{\yf^3}^2)\right)^2}\geq \avga{\left(\yf^2-1\right)^2}.
\end{equation}
Here, we used $\avg{\yf^2} = \avg{\ffp^2} = 1$ and $\left(1- \avg{\yf^3}^2\right)^2 \geq 0$. 
Since $0 = \scalar{\ffm}{\ffp} = \avg{\emin\yf^2}$, using \eqref{eq:lower_bound_cubic_aux4}, we conclude
\begin{equation} \label{eq:lower_bound_cubic_aux1}
 \avg{\emin}^2 = \avg{\emin(1-\yf^2)}^2 \leq \avg{(1-\yf^2)^2} \lesssim \psi + \sigma^2. 
\end{equation}
This implies \eqref{eq:stability_cubic} for $\abs{\measone(\Xfone)-\meastwo(\Xftwo)} > \iota_* (\measone(\Xfone) + \meastwo(\Xftwo))$ as $\avg{\emin}^2 \geq \iota_*^2 \sim 1$. 
This completes the proof of Lemma~\ref{lem:stab_cubic}.
\end{proof}

Following the remaining arguments of chapter 8 and 9 in \cite{AjankiQVE} yields Proposition \ref{pro:singularities_mf}.

\section{Proofs of Theorem~\ref{thm:prop_selfcon_density_states} and Theorem~\ref{thm:local_law}}

\begin{proof}[Proof of Theorem \ref{thm:prop_selfcon_density_states}]
By Remark~\ref{rem:Gram_assumptions_to_QVE}, we can apply Proposition \ref{pro:properties_of_mf} for each $\deltamf>0$.
Hence, there are $\brho^0 \in \BF$ and $\brho^d \colon \Xf \times \R\setminus \{0\} \to [0,\infty)$ such that 
\[ \brho_x(\di \tau ) = \brho^0_x \delta_0(\di\tau) + \brho^d_x(\tau) \di \tau \]
for all $x \in\Xf$.  For $k\in \Xfone$, we set $\dens^0_k \defeq \brho^0_k$ and
\begin{equation} \label{eq:dens_x_d}
 \dens_k^d(E) \defeq E^{-1/2} \brho_k^d(E^{1/2}) \char(E >0)
\end{equation}
with $E \in \R$.
Therefore, using \eqref{eq:transform_QVE_to_Gram}, we obtain~\eqref{eq:def_dens_0_dens_d} (cf. the proof of Theorem 2.1 in~\cite{AltGram}).
The $1/3$-Hölder continuity of $\brho^d$ implies the $1/3$-Hölder continuity of 
$\dens^d$. Similarly, the analyticity of $\dens^d$ is obtained from the analyticity of $\brho^d$. 
From Proposition~\ref{pro:singularities_mf} with $\deltamf= \sqrt{\delta}/2$, we conclude that $\posset\cap (\delta, \infty)$ is a finite union of open intervals and 
its connected components have a Lebesgue measure of at least $2\rho_*$ for some $\rho_*$ depending only on the model parameters and $\delta$. 
This completes the proof (i). 

For the proof of (ii), we follow the proof of Theorem 2.6 in \cite{AjankiCPAM}. We replace the estimates (4.1), (4.2), (5.3) and (6.7) as well as their proofs in \cite{AjankiCPAM} by 
 \eqref{eq:mf_order_one}, \eqref{eq:Im_mf_sim_avg_Im_mf}, \eqref{eq:correct_estimate_norm_Bf_inverse} and \eqref{eq:stability_cubic} as well as their proofs in this note, respectively. 
This proves a result corresponding to Theorem 2.6 in \cite{AjankiCPAM} for $\brho^d$ and $\tau_0 \in (\pt\posset) \cap (0,\infty)$ in our setup. 
Using the transform \eqref{eq:dens_x_d} completes the proof of Theorem~\ref{thm:prop_selfcon_density_states}.
\end{proof}

\begin{proof}[Proof of Theorem~\ref{thm:local_law}]
Note that \ranupbound implies \Ltwoinf.
By Remark \ref{rem:Gram_assumptions_to_QVE}, \mbounded implies \mfbounded.
Using \eqref{eq:stability_cubic_general} to replace (8.34) in \cite{AjankiQVE}, we obtain an analogue of Proposition 10.1 in~\cite{AjankiQVE} in our setup on $\HbdSclosed$. 
Therefore, we have proven in our setup analogues of all the ingredients provided in \cite{AjankiQVE} and used in \cite{Ajankirandommatrix} to prove a local law for Wigner-type random matrices with a 
uniform primitive variance matrix.
Thus, following the arguments in \cite{Ajankirandommatrix}, we obtain a local law for the resolvent of $\Hf$ defined in \eqref{eq:def_Hf} and spectral parameters 
$z \in \HbdS\cap \{ w \in \Hb \colon \Im w \geq (p+n)^{-1+\gamma}\}$, where $\deltamf = \sqrt{\delta}/2$ and $\gamma\in(0,1)$. 
Proceeding as in the proof of Theorem 2.2 in \cite{AltGram} yields Theorem~\ref{thm:local_law}.
\end{proof}

%%%%%%%%%%%%%%%%%%%%%%%%%%%%%%%%%%%%%%%%%%%%%%%%%%%%%%%%%%%%%%%%%%%
%%                                                               %%
%% Use the two commands below for producing your bibliography    %%
%% with bibtex, then comment again the commands and include the  %%
%% content of the .bbl file in this file below the commands.     %%
%%                                                               %%
%%%%%%%%%%%%%%%%%%%%%%%%%%%%%%%%%%%%%%%%%%%%%%%%%%%%%%%%%%%%%%%%%%%

%\bibliographystyle{amsplain}
%\bibliography{yourbibfilename}

% add below the content of your .bbl file produced by bibtex.

\providecommand{\bysame}{\leavevmode\hbox to3em{\hrulefill}\thinspace}

%%%%%%%%%%%%%%%%%%%%%%%%%%%%%%%%%%%%%%%%%%%%%%%%%%%%%%%%%%%%%%%%%%%
%%                                                               %%
%% You have reached the end of your document.                    %%
%%                                                               %%
%%%%%%%%%%%%%%%%%%%%%%%%%%%%%%%%%%%%%%%%%%%%%%%%%%%%%%%%%%%%%%%%%%%

\end{document}